\documentclass[final,leqno]{siamltex}
\usepackage{amsmath,amssymb}
\usepackage{graphicx}
\usepackage{verbatim}

\newtheorem{remark}[theorem]{Remark}
\newtheorem{conjecture}[theorem]{Conjecture}

\pagestyle{myheadings}
\setlength{\hoffset}{0.7truein}

\newcommand{\csta}{C_{\rm sta}}

\newcommand{\ep}{\varepsilon}

\newcommand{\om}{\omega}
\newcommand{\Om}{\Omega}
\newcommand{\pa}{\partial}

\renewcommand{\i}{{\rm\mathbf i}}

\DeclareMathOperator{\re}{{Re}}
\DeclareMathOperator{\im}{{Im}}

\newcommand{\bL}{\mathbf{L}}
\newcommand{\bH}{\mathbf{H}}

\newcommand{\bV}{\mathbf{V}}

\newcommand{\p}{\partial}

\newcommand{\Ome}{\Omega}

\newcommand{\ddiv}{\mbox{\rm div\,}}

\newcommand{\bff}{\mathbf{f}}

\newcommand{\bfe}{\mathbf{e}}
\newcommand{\bfn}{\mathbf{n}}

\newcommand{\bfg}{\mathbf{g}}

\newcommand{\bfu}{\mathbf{u}}

\newcommand{\bfv}{\mathbf{v}}
\newcommand{\bfw}{\mathbf{w}}
\newcommand{\bfx}{\mathbf{x}}

\newcommand{\bfE}{\mathbf{E}}

\newcommand{\bfH}{\mathbf{H}}
\newcommand{\bfF}{\mathbf{F}}

\newcommand{\bfG}{\mathbf{G}}
\newcommand{\bfP}{\mathbf{P}}

\newcommand{\bfV}{\mathbf{V}}
\newcommand{\bfU}{\mathbf{U}}

\newcommand{\bfi}{\mathbf{i}}
\newcommand{\bfa}{\boldsymbol{ \alpha}}
\newcommand{\bnabla}{\boldsymbol{\nabla}}
\newcommand{\bdiv}{\mbox{\rm \bf div\,}}

\newcommand{\bftu}{\tilde{\mathbf{u}}}
\newcommand{\bfhu}{\hat{\mathbf{u}}}

\newcommand{\bfhw}{\hat{\mathbf{w}}}
\newcommand{\bfpsi}{\boldsymbol{ \psi}}
\newcommand{\bfphi}{\boldsymbol{ \phi}}


\title{An unconditionally stable discontinuous Galerkin method for the elastic Helmholtz 
equations with large frequency}
\markboth{X. FENG AND C. LORTON}{DG METHODS FOR THE ELASTIC HELMHOLTZ EQUATIONS}

\author{
Xiaobing Feng\thanks{Department of Mathematics, The University of
Tennessee, Knoxville, TN 37996, U.S.A.  ({\tt xfeng@math.utk.edu}).
The work of this author was partially supported by the NSF grants DMS-1016173 and DMS-1318486.}
\and
Cody Lorton\thanks{Department of Mathematics and Statistics, University of
West Florida, Pensacola, FL 32514, U.S.A.  ({\tt clorton@uwf.edu}).
The work of this author was partially supported by the NSF grants DMS-1016173 and DMS-1318486.}
}


\setcounter{page}{1}

\begin{document}

\maketitle

\begin{abstract}
In this paper we propose and analyze an interior penalty discontinuous Galerkin
(IP-DG) method using piecewise linear polynomials for the elastic Helmholtz equations 
with the first order absorbing boundary condition. It is proved that the sesquilinear form
for the problem satisfies a generalized weak coercivity property, which immediately infers
a stability estimate for the solution of the differential problem in all frequency regimes.
It is also proved that the proposed 
IP-DG method is unconditionally stable with respect to both frequency $\om$ and
mesh size $h$. Sub-optimal order (with respect to $h$) error estimates 
in the broken $H^1$-norm and in the $L^2$-norm are 
obtained in all mesh regimes. These estimate improve
to optimal order when the mesh size $h$ is restricted to the pre-asymptotic regime  
(i.e., $\om^{\beta} h =O(1)$ for some $1\leq \beta<2$).  The novelties of 
the proposed IP-DG method include:
first, the method penalizes not only the jumps of the function values
across the element edges but also the jumps of the normal derivatives; 
second, the penalty parameters are taken as complex numbers with positive imaginary parts.
In order to establish the desired unconditional stability 
estimate for the numerical solution, the main idea is to exploit a (simple) property 
of linear functions to overcome the main difficulty caused by non-Hermitian 
nature and strong indefiniteness of the Helmholtz-type problem. The error 
estimate is then derived using a nonstandard technique adapted from \cite{Feng_Wu09}.
Numerical experiments are also presented to validate the theoretical results and
to numerically examine the pollution effect (with respect to $\om$) in the error bounds. 
\end{abstract}

\begin{keywords}
Elastic Helmholtz equations, Korn's inequality, unconditional stability,
discontinuous Galerkin methods, generalized weak coercivity, error estimates.
\end{keywords}

\begin{AMS}
65N12, 
65N15, 
65N30, 
78A40  
\end{AMS}

\section{Introduction}\label{sec-1}
Wave phenomena are pervasive throughout many scientific fields, wave computation 
has been one of the central topics in computational sciences and has also 
been at the forefront of scientific computing. It becomes more and 
more important as the boundary of wave-related application problems keeps pushing outward. Solving these application problems largely hinges on
computing the solutions of the governing wave equation(s).  
Among them those involving high frequency waves are often most difficult 
to analyze and solve numerically.  This is due to the fact that a high 
frequency (or large wave number), which means a very short wave length, 
results in a strongly indefinite PDE problem and a highly oscillatory solution. 
Consequently, very fine meshes must be used to resolve the wave, 
which in turn results in extremely large non-Hermitian and strongly 
indefinite algebraic systems to solve, a daunting task especially in high dimensions.

In this paper we shall focus our attention on one type of wave phenomena, that is wave 
propagation through some isotropic elastic media. This type is often encountered
in applications from materials science, medical science, petroleum engineering, 
just to name a few. Specifically, we shall consider the following elastic Helmholtz problem: 
\begin{align}
	-\om^2 \rho \bfu - \bdiv(\sigma(\bfu)) &= \bff \qquad \mbox{ in } \Om, \label{Eq:ElasticPDE} \\
	\bfi \om A \bfu + \sigma(\bfu) \bfn &= \bfg \qquad \mbox{ on } \Gamma:=\p\Ome, 
\label{Eq:ElasticBC}
\end{align}
where $\Om \subset \mathbb{R}^d, d = 2,3$, which represents an isotropic elastic media, 
is an open and bounded domain. $\bfn$ denotes the unit outward normal vector to $\p\Ome$, 
and $\bfi = \sqrt{-1}$ denotes the imaginary unit. 
$\bfu : \Om \to \mathbb{C}^d$ denotes the (Fourier-transformed) displacement vector of the elastic 
media $\Om$. $\om$ and $\rho$ are both positive constants for which $\om$ denotes 
the frequency of the sought-after elastic wave traveling through $\Om$ with speed $\rho$.  
$A$ is a $d \times d$ real symmetric positive definite matrix. $\sigma(\bfu)$ is the 
stress-tensor in $\Om$ defined by
\begin{align}
\sigma(\bfu) := 2\mu \varepsilon(\bfu) + \lambda \ddiv \bfu I, 
\qquad \varepsilon(\bfu):= \frac{1}{2}\left(\bnabla \bfu + \bnabla \bfu^T \right), \notag
\end{align}
where $\mu > 0$ and $\lambda > 0$ are the Lam\'{e} constants for the elastic media $\Om$.

Equations \eqref{Eq:ElasticPDE} and \eqref{Eq:ElasticBC} can arise either from the frequency domain 
treatment of linear elastic wave equations  
(cf. \cite{Douglas_Sheen_Santos94}) or from seeking a time-harmonic solution to the 
following linear elastic wave equations: 
\begin{alignat}{2}
\rho \bfU_{tt} - \bdiv(\sigma(\bfU)) & = \bfF 
&&\qquad \mbox{ in } \Om \times (0,\infty), \label{Eq:LinearElasticPDE} \\
A \bfU + \sigma(\bfU) \bfn &= \bfG 
&&\qquad \mbox{ on } \Gamma \times (0, \infty), \label{Eq:LinearElasticBC} \\
\bfU = \bfU_t &= \mathbf{0} &&\qquad \mbox{ in } \Om \times \{t = 0 \}. \label{Eq:LinearElasticIC}
\end{alignat}
When $\mathbf{G}=\mathbf{0}$ the boundary condition \eqref{Eq:LinearElasticBC} is known
as the first-order absorbing (radiation) boundary condition \cite{Enquist_Majda79}. 
This boundary condition is an artificial boundary condition which has the property of
completely absorbing incoming plane waves that are perpendicular to the boundary 
$\p\Omega$. This boundary condition is imposed on the boundary $\p\Omega$ of the truncated domain $\Omega$ of
some unbounded domain with the intent of making the problem computationally feasible.  

As is the case for the scalar Helmholtz equation, which governs time-harmonic acoustic waves, 
the frequency $\om$ also plays a key role in the analysis and implementation of any numerical 
method for the elastic Helmholtz equations.  It is a well-known fact that in order to 
resolve the wave numerically one must use some minimum number of grid points in each 
wave length $\ell = 2 \pi/\om$ in every coordinate direction.  This requires a minimum 
mesh constraint $\om h = O(1)$, where $h$ denotes the mesh size.  In fact, the widely held
``rule of thumb" is to use $6 - 10$ grid points per wave length.  
In \cite{Ihlenburg_Babuska95}, Bab\v{u}ska \emph{et al} proved the necessity of 
this ``rule of thumb" in the $1$-dimensional case of the scalar Helmholtz equation.  
In \cite{Ihlenburg_Babuska95} it was also shown that the H$^1$ error bound for the 
finite element solution contains a pollution term that contributes to the loss of 
stability for the finite element method in the case of a large frequency. This pollution term 
also causes the error to increase as $\om$ increases under the mesh constraint $\om h = O(1)$.  
This forces one to adopt a more stringent mesh condition to guarantee an accurate 
numerical solution.

The loss of stability of the standard finite element method applied to Helmholtz-type 
problems remains as an open issue to be resolved.  In 
\cite{Aziz_Kellogg79,Douglas_Santos_Sheen_Bennethum93,Douglas_Sheen_Santos94} 
a stringent mesh condition of $\om^2 h = O(1)$ (called the asymptotic mesh regime) 
was required to obtain optimal and quasi-optimal error estimates for the scalar 
Helmholtz equation. In \cite{Cummings01} a similar mesh constraint of the form 
$\om^{\beta} h = O(1)$ was used to obtain error estimates for the elastic 
Helmholtz equations, where $\beta \geq 2$ is a constant to be discussed later 
in this paper. Requiring such a stringent mesh constraint makes the use of a practical 
coarse approximation space impossible in the case that $\om$ is large.  This is a 
major hurdle that must be overcome if one wishes to use multi-level algebraic solvers 
such as the multigrid method or the multi-level domain decomposition method.

The primary goal of this paper is to develop and analyze an interior penalty 
discontinuous Galerkin approximation method which is unconditionally stable with respect to 
both $\om$ and $h$ for the above elastic Helmholtz problem, which is 
different from the goal of the earlier DG work \cite{RSWW03}.  Specifically, we seek 
a numerical method in which a priori solution (or stability ) estimates can be 
obtained for all $\om, h > 0$.  Our work in this paper can be regarded as the elastic counterpart
of those reported in \cite{Feng_Wu09,Wu12,Feng_Wu14}, where the scalar Helmholtz equation
and the time-harmonic Maxwell's equations were the focuses of study. We like to 
note that all three Helmholtz-type problems share the above mentioned two
main difficulties, namely, strong indefiniteness and non-Hermitian character.
However, due to the fact that to their leading operators are very different, in particular, 
they have quite different kernel spaces, the formulation and
analysis of the desired IP-DG methods are significantly different,
although they are conceptually similar.

The remainder of the paper is organized as follows.  In Section \ref{sec-1b} we establish
a new generalized weak coercivity property for the elastic operator $-\ddiv (\sigma(\cdot))$,
and derive a stability estimate for the PDE solution as a corollary of the generalized weak 
coercivity property.
In Section \ref{sec-2} we introduce some notation and present the formulation of our IP-DG method 
studied in the rest of the paper.  In Section \ref{sec-3} we derive a stability (or a priori solution)
estimate for the numerical solution in all mesh regimes including the pre-asymptotic regime 
(i.e. $\om^\beta h=O(1)$ for some $1\leq \beta <2$). In Section \ref{sec-4} we employ the non-standard 
error estimate technique, which uses an elliptic projection along with the (discrete) stability estimate, 
to establish optimal order error estimates in both broken H$^1$-norm and the $L^2$-norm.  
Section \ref{sec:elastic_IPDG_numerics} is devoted to numerical experiments that validate our
theoretical results and demonstrate the proposed method's advantage over the standard finite element method.

\section{Generalized inf-sup condition and stability estimates for PDE solutions} \label{sec-1b}
The goal of this section is to prove that the sesquilinear form
$a(\cdot,\cdot)$ defined in the weak formulation of 
\eqref{Eq:ElasticPDE}--\eqref{Eq:ElasticBC} satisfies a generalized weak
coercivity property.  Similar results were also obtained for the scalar
Helmholtz equation and the time-harmonic Maxwell's equations 
\cite{Feng_Wu14, Lorton_14}.  As a direct consequence of the weak coercivity
property proved in this section, we will obtain stability estimates for the
solution $\bfu$ of \eqref{Eq:ElasticPDE}--\eqref{Eq:ElasticBC}.

Throughout the rest of this paper we adopt the standard $L^2$-space norm and
inner product notation.  In particular, for $Q \subset \Om$ and
$\Sigma \subset \Gamma$, let $(\cdot,\cdot)_{Q}$ and
$\langle \cdot, \cdot \rangle_{\Sigma}$ denote the $L^2$ inner product on 
the complex-valued inner product spaces $\bL^2(Q) = \big( L^2(Q) \big)^d$ and
$\bL^2(\Sigma) = \big(L^2(\Sigma) \big)^d$, respectively.

We will study the weak formulation of 
\eqref{Eq:ElasticPDE}--\eqref{Eq:ElasticBC} defined as seeking $\bfu \in 
\bH^1(\Om)$ such that 
\begin{align} \label{Eq:WeakForm1a}
	a(\bfu,\bfv) = (\bff, \bfv)_{\Om} + \langle \bfg, \bfv \rangle_{\Gamma}
		\qquad \forall \bfv \in \bH^1(\Om),
\end{align}
where $a(\cdot,\cdot)$ is the sesquilinear form defined on $\bH^1(\Om) \times 
\bH^1(\Om)$ as
\begin{align} \label{Eq:WeakForm1b}
	a(\bfu,\bfv) := \lambda \big( \ddiv \bfu, \ddiv \bfv \big)_{\Om}
		+ 2\mu \big( \ep(\bfu), \ep(\bfv) \big)_{\Om} - \om^2 \rho \big( 
		\bfu, \bfv \big)_{\Om} + \i \om \big \langle A \bfu, \bfv \big 
		\rangle_{\Gamma}.
\end{align}

To show that $a(\cdot,\cdot)$ satisfies a generalized weak coercivity
property we will need to assume that the domain $\Om$ is ``nice''.  In 
particular, we assume that $\Om$ is a strictly star-shaped domain, i.e. there 
exists $x_0 \in \Om$ and $c_0 > 0 $ such that 
\begin{align} \label{StarShape2}
	(\bfx - \bfx_0) \cdot \bfn \geq c_0 \qquad \forall x \in \Gamma.
\end{align}

We will need to make use of Korn's second inequality to achieve our result. 
This inequality is given in the following lemma.

\begin{lemma} \label{lem:KornInequality}
	There exists a positive constant $K$ such that for any $\bfv \in 
	\bH^1(\Om)$ the following inequality holds
	\begin{align} \label{KornInequality}
		\| \bfv \|_{H^1(\Om)} \leq K \Big[ \| \ep(\bfv) \|_{L^2(\Om)} + \| 
		\bfv \|_{L^2(\Om)} \Big].
	\end{align}
\end{lemma}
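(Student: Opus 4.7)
The plan is to prove this classical Korn inequality via the Nečas approach based on the Lions lemma. First I would establish the elementary algebraic identity
$$
\partial_k \partial_\ell v_j = \partial_\ell \ep_{jk}(\bfv) + \partial_k \ep_{j\ell}(\bfv) - \partial_j \ep_{k\ell}(\bfv),
$$
which follows directly from $\ep_{ij}(\bfv) = \tfrac12 (\partial_i v_j + \partial_j v_i)$. This identity shows that whenever $\ep(\bfv) \in \bL^2(\Om)$, the gradient of each partial derivative $\partial_j v_i$, viewed as a distribution on $\Om$, belongs to $\bH^{-1}(\Om)$, with norm controlled by $\|\ep(\bfv)\|_{\bL^2(\Om)}$.

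Next I would invoke the Lions (or Nečas) lemma: on a bounded Lipschitz domain, a distribution $f \in \mathcal{D}'(\Om)$ satisfying $f \in H^{-1}(\Om)$ and $\nabla f \in \bH^{-1}(\Om)$ must in fact lie in $L^2(\Om)$, with
$$
\|f\|_{L^2(\Om)} \leq C\Bigl(\|f\|_{H^{-1}(\Om)} + \|\nabla f\|_{\bH^{-1}(\Om)}\Bigr).
$$
Applying this to $f = \partial_j v_i$ for $\bfv \in \bH^1(\Om)$: the $H^{-1}$ norm of $\partial_j v_i$ is trivially bounded by $\|v_i\|_{L^2(\Om)}$, while the identity of the previous step bounds $\|\nabla(\partial_j v_i)\|_{\bH^{-1}(\Om)}$ by $\|\ep(\bfv)\|_{\bL^2(\Om)}$. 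Summing over $i,j$ gives $\|\nabla \bfv\|_{\bL^2(\Om)} \leq C(\|\bfv\|_{\bL^2(\Om)} + \|\ep(\bfv)\|_{\bL^2(\Om)})$, and adding $\|\bfv\|_{\bL^2(\Om)}$ on both sides yields the desired estimate \eqref{KornInequality} with $K$ depending only on $\Om$.

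The main obstacle is the Lions lemma itself: on a bounded Lipschitz domain its proof is nontrivial, relying on the density of smooth functions, a characterization of $H^{-1}$ via duality, and a partition-of-unity reduction to the half-space case. Since this lemma is classical, I would simply cite a standard reference (e.g. Nečas, or Duvaut--Lions) rather than reprove it. An alternative route is a compactness-and-contradiction argument using the fact that $\ker \ep$ consists of infinitesimal rigid motions, but to pass from the weak limit back to an $H^1$ bound one still ultimately requires Lions' lemma (or an equivalent regularity statement), so the approach above is the cleanest.
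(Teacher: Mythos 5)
Your argument is correct, but there is nothing in the paper to compare it against: the authors state Lemma \ref{lem:KornInequality} as the classical second Korn inequality and give no proof at all, implicitly treating it as a known result from the literature. What you have written is the standard N\v{e}cas/Lions-lemma proof of that classical fact, and it is sound as a sketch: the algebraic identity $\partial_k\partial_\ell v_j = \partial_\ell \ep_{jk}(\bfv) + \partial_k \ep_{j\ell}(\bfv) - \partial_j \ep_{k\ell}(\bfv)$ is verified by a direct computation from the definition of $\ep$, the bound $\|\partial_j v_i\|_{H^{-1}(\Om)} \leq \|v_i\|_{L^2(\Om)}$ is immediate by duality against $H^1_0(\Om)$, and the quantitative form of Lions' lemma on a bounded Lipschitz domain then delivers the estimate with a constant depending only on $\Om$. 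You are right that the entire difficulty is concentrated in Lions' lemma itself, and citing it (N\v{e}cas, or Duvaut--Lions) is the appropriate move for a result of this vintage; the only hypothesis you should make explicit is that $\Om$ has Lipschitz boundary, which is consistent with (indeed implied by) the strict star-shapedness assumption \eqref{StarShape2} used elsewhere in the paper.
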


Lastly, we will need to assume that the solution $\bfu$ to 
\eqref{Eq:WeakForm1a} satisfies a Korn-type inequality on the boundary $
\Gamma$.  This boundary Korn inequality was conjectured in 
\cite{Cummings_Feng06} and it was necessary for the authors to obtain 
stability estimates for the Elastic Helmholtz equation that are optimal in 
the frequency $\om$.  Here it is stated as a conjecture.

\begin{conjecture}\label{conj}
	There exists a positive constant $\tilde{K}$ such that for any $\bfu \in 
	\bH^2(\Om)$, the following Korn-type inequality holds
	\begin{align} \label{BoundaryKorn}
		\| \bnabla \bfu \|^2_{L^2(\Gamma)} \leq \tilde{K} \Big[ \| \bfu \|
		^2_{L^2(\Gamma)} + \| \ep (\bfu) \|^2_{L^2(\Gamma)} \Big].
	\end{align}
\end{conjecture}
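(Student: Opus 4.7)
The plan is to reduce the conjecture to an $L^2(\Gamma)$-estimate on only the antisymmetric (spin) part of $\bnabla\bfu|_\Gamma$, and then to close that reduced estimate by tangential integration by parts on the closed manifold $\Gamma$, with a compactness-contradiction argument as a backup. First, I would use the orthogonal splitting $\bnabla\bfu = \ep(\bfu) + \boldsymbol{\omega}(\bfu)$, where $\boldsymbol{\omega}(\bfu) := \tfrac{1}{2}\bigl(\bnabla\bfu - (\bnabla\bfu)^T\bigr)$ is the spin tensor. Since this splitting is orthogonal in the Frobenius inner product, the pointwise identity $|\bnabla\bfu|^2 = |\ep(\bfu)|^2 + |\boldsymbol{\omega}(\bfu)|^2$ holds on $\Gamma$, and integrating it reduces \eqref{BoundaryKorn} to showing
\begin{align*}
\|\boldsymbol{\omega}(\bfu)\|^2_{L^2(\Gamma)} \leq C\bigl[\|\bfu\|^2_{L^2(\Gamma)} + \|\ep(\bfu)\|^2_{L^2(\Gamma)}\bigr].
\end{align*}

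Next, I would work in a local orthonormal frame $(\bftau_1,\dots,\bftau_{d-1},\bfn)$ along $\Gamma$ and focus on the tangential-normal entries $\omega_{\tau_i n} = \tfrac{1}{2}(\p_n u_{\tau_i} - \p_{\tau_i} u_n)$ of $\boldsymbol{\omega}(\bfu)$; when $d=3$ the purely tangential entry $\omega_{\tau_1\tau_2}$ is a surface curl of $\bfu|_\Gamma$ and is controlled by a standard Korn-type inequality on the closed surface $\Gamma$. The key algebraic identity
\begin{align*}
(\omega_{\tau_i n})^2 = (\ep_{\tau_i n})^2 - (\p_{\tau_i}u_n)(\p_n u_{\tau_i})
\end{align*}
reduces the task to bounding the mixed product $\int_\Gamma (\p_{\tau_i}u_n)(\p_n u_{\tau_i})\,ds$ by $\|\bfu\|^2_{L^2(\Gamma)}+\|\ep(\bfu)\|^2_{L^2(\Gamma)}$. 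Since $\Gamma=\p\Ome$ is closed, tangential integration by parts produces no boundary terms; commuting mixed partials (legitimate for $\bfu\in\bH^2(\Ome)$) and using $\p_{\tau_i}u_{\tau_i} = (\ep(\bfu))_{\tau_i\tau_i}$ modulo curvature corrections from the moving frame yields, up to lower-order terms absorbable into $\|\bfu\|^2_{L^2(\Gamma)}$,
\begin{align*}
\int_\Gamma (\p_{\tau_i}u_n)(\p_n u_{\tau_i})\,ds = -\int_\Gamma u_n\,\p_n(\ep(\bfu))_{\tau_i\tau_i}\,ds.
\end{align*}
One would then try to rewrite the trace of $\p_n(\ep(\bfu))_{\tau_i\tau_i}$ on $\Gamma$ as a tangential derivative, exploiting the local geometry of $\Gamma$ together with the star-shapedness assumption \eqref{StarShape2}, and finish by Cauchy-Schwarz.

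The main obstacle is precisely this last rewriting step: the trace of the normal derivative $\p_n(\ep(\bfu))_{\tau_i\tau_i}$ is \emph{not} controlled by $\|\ep(\bfu)\|_{L^2(\Gamma)}$ alone, so a naive Cauchy-Schwarz bound fails and a more subtle identity, perhaps of Rellich multiplier type, seems necessary. A fallback is a compactness-contradiction argument: if \eqref{BoundaryKorn} failed one would extract $\{\bfu_k\}\subset\bH^2(\Ome)$ normalized by $\|\bnabla\bfu_k\|_{L^2(\Gamma)}=1$ and with $\|\bfu_k\|_{L^2(\Gamma)}+\|\ep(\bfu_k)\|_{L^2(\Gamma)}\to 0$, pass to a weak limit $\bfu_\infty$, and observe that $\ep(\bfu_\infty)=\mathbf{0}$ together with $\bfu_\infty=\mathbf{0}$ on $\Gamma$ forces $\bfu_\infty$ to be a rigid motion vanishing on $\Gamma$, hence $\bfu_\infty\equiv\mathbf{0}$, which contradicts $\|\bnabla\bfu_\infty\|_{L^2(\Gamma)}=1$. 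The residual difficulty is that $\|\bnabla\bfu_k\|_{L^2(\Gamma)}=1$ alone supplies no interior compactness; closing that gap, presumably by supplementing the normalization with an $\bH^2(\Ome)$-seminorm bound and invoking the compact embedding $\bH^{1/2}(\Gamma)\hookrightarrow\bL^2(\Gamma)$, is the essential content of the statement and is presumably why the authors leave it as a conjecture.
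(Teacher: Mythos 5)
There is no proof of this statement in the paper to compare yours against: it is posed as an open conjecture (attributed to Cummings and Feng), and the paper only ever \emph{assumes} it for the particular solution $\bfu$ of the elastic Helmholtz problem, via the set $\bV_{\tilde{K}}$. So the only question is whether your attempt actually closes it, and it does not. Your opening reduction is fine --- the splitting $\bnabla\bfu=\ep(\bfu)+\boldsymbol{\omega}(\bfu)$ is Frobenius-orthogonal pointwise, so it suffices to bound the spin tensor on $\Gamma$ --- but both of your routes then use only the two boundary traces $\bfu|_\Gamma$ and $\p_n\bfu|_\Gamma$, and that is already fatal, because for $\bfu\in\bH^2(\Om)$ these traces can be prescribed independently of one another. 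Concretely, on the square $\Om=(-0.5,0.5)^2$ used in the paper's own numerics, take $\phi\in C_c^\infty(-0.5,0.5)$ and set $\bfu(x,y)=\chi(y)\bigl(-(y+\tfrac12)\phi'(x),\,\phi(x)\bigr)$ with $\chi$ a cutoff equal to $1$ near the bottom edge; on that edge one checks $\ep(\bfu)\equiv\mathbf{0}$ while $|\bnabla\bfu|^2=2|\phi'|^2$ and $|\bfu|^2=|\phi|^2$, so choosing $\phi=k^{-1}\sin(kx)$ (suitably localized) sends the ratio of the two sides of \eqref{BoundaryKorn} to infinity. (The same construction works near a curved boundary using Fermi coordinates with $\bfu=\phi\,\bfn-t\nabla_\Gamma\phi$, at the cost of curvature terms of order $\|\phi\|_{L^2(\Gamma)}$.) The inequality quantified over all of $\bH^2(\Om)$ is therefore false; if it holds at all it can only be for solutions of \eqref{Eq:ElasticPDE}--\eqref{Eq:ElasticBC}, and any proof must use the PDE to couple $\p_n\bfu|_\Gamma$ to the tangential data. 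Neither of your arguments does this, so neither can be repaired by more careful boundary calculus.

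As for the two specific gaps you flag, both are genuine and neither is merely technical. In the integration-by-parts route you end up holding $\int_\Gamma u_n\,\p_n(\ep(\bfu))_{\tau_i\tau_i}\,ds$, a \emph{transversal} derivative of the strain on $\Gamma$: this is new second-order information not contained in $\bfu|_\Gamma$ or $\ep(\bfu)|_\Gamma$, and no tangential identity, Rellich multiplier, or use of star-shapedness can convert it back into the right-hand side of \eqref{BoundaryKorn} --- the counterexample above makes exactly this quantity large while $\|\bfu\|_{L^2(\Gamma)}+\|\ep(\bfu)\|_{L^2(\Gamma)}$ stays small. In the compactness route, the normalization $\|\bnabla\bfu_k\|_{L^2(\Gamma)}=1$ is not preserved under the weak convergence you can actually extract, so the contradiction at the limit evaporates; supplementing it with a uniform $\bH^2(\Om)$ bound, as you propose, turns the statement into a consequence of the trace theorem with a constant depending on $\|\bfu\|_{H^2(\Om)}$, which is precisely what the frequency-explicit stability analysis of Section \ref{sec-1b} cannot afford; and even formally, $\ep(\bfu_\infty)=\mathbf{0}$ \emph{on $\Gamma$ only} does not force $\bfu_\infty$ to be a rigid motion. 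In short: the proposal correctly identifies where the difficulty sits, but the statement it attacks is, in its stated generality, not true, and the viable version (for PDE solutions) requires input your argument never invokes.
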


With this boundary Korn inequality in mind, we define the following function spaces.
\begin{align*}
	\bV &:= \Big \{ \bfv \in \bH^1(\Om); \,\, |\nabla \bfu|_\Gamma \in \bL^2(\Gamma) \Big \}, \\
	\bV_{\tilde{K}} &:= \Big\{ \bfv \in \bV; \,\, \mbox{ 
		\eqref{BoundaryKorn} holds with constant } \tilde{K} \Big \}.
\end{align*}

We also note that since $A$ in \eqref{Eq:ElasticBC} is SPD, there exist positive constants 
$c_A$ and $C_A$ such that
\begin{align} \label{SPDInequality}
	c_A \| \bfv \|^2_{L^2(\Gamma)} \leq \big \langle A \bfv, \bfv \big 
		\rangle_{\Gamma} \leq C_A \| \bfv \|^2_{L^2(\Gamma)} \qquad \forall 
		\bfv \in \bL^2(\Gamma).
\end{align}

The following theorem states the generalized weak coercivity property for the 
sesquilinear form $a(\cdot,\cdot)$.

\begin{theorem} \label{thm:GenWeakCoercivity}
	Let $\Om$ be a strictly star-shaped domain with $diam(\Om) = R$ and let $
	\tilde{K}$ be a positive	constant.  Then for any $\bfu \in \bV_{\tilde{K}}
	$ the following inequality holds
	\begin{align} \label{GenWeakCoercivity}
		\sup_{\bfv \in \bV} \frac{| \im a(\bfu, \bfv) |}{\| \bfv \|_E} + 
			\sup_{\bfv \in \bH^1(\Om)} \frac{| \re a(\bfu, \bfv) |}{||| \bfv 
			|||_{L^2(\Om)}} \geq \frac{1}{\gamma} \| \bfu \|_E,
	\end{align}
	where
	\begin{align*}
		\gamma &:= \left[ 4 R^2 K \left( 1 + \frac{\om^2 \rho}{2 \mu} \right) 
		 	+ 4R^2 \tilde{K} + (d-1)^2 \right]^{\frac{1}{2}}, \\
		 M &:= \frac{1}{c_A c_0 \mu} \left( R \om^2 \rho c_0 \mu + R^2 \om 
		 	C_A \tilde{K} + c_0^2 \mu^2 \right), \\
		 \| \bfv \|^2_E &:= \om^2 \rho \| \bfv \|^2_{L^2(\Om)} + \lambda \| 
		 	\ddiv \bfv \|^2_{L^2(\Om)} + 2 \mu \| \ep(\bfv) \|_{L^2(\Om)} \\
		 & \qquad + c_0 \lambda \| \ddiv \bfv \|^2_{L^2(\Gamma)} + c_0 \mu \| 
		 	\ep (\bfv) \|^2_{L^2(\Gamma)}, \\
		 ||| \bfv |||_{L^2(\Ome)}^2 &:= \om^2 \rho \| \bfv \|^2_{L^2(\Om)} + c_0 \mu \| 
		 	\bfv \|^2_{L^2(\Gamma)}.
	\end{align*}
\end{theorem}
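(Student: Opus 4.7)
The overall plan is to adapt the Rellich--Morawetz multiplier technique (used in \cite{Feng_Wu14} for the scalar Helmholtz equation) to the elasticity operator $-\bdiv\si(\cdot)$. I would test the sesquilinear form against two vector fields: $\bfv_1:=\bfu$, to extract boundary $L^2$-control from $\im a(\cdot,\cdot)$; and $\bfv_2:=\mathbf{q}+\alpha\bfu$ with $\mathbf{q}:=(\bfx-\bfx_0)\cdot\nabla\bfu$ and a real constant $\alpha$ to be fixed, to extract bulk and boundary $E$-type control from $\re a(\cdot,\cdot)$. The first test function is normalized by $\|\bfv_1\|_E=\|\bfu\|_E$, feeding the first supremum in \eqref{GenWeakCoercivity}; the second by $|||\bfv_2|||_{L^2(\Om)}$, feeding the second. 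Since $A$ is real, symmetric and SPD, $\im a(\bfu,\bfu)=\om\langle A\bfu,\bfu\rangle_\Ga\ge c_A\om\|\bfu\|^2_{L^2(\Ga)}$, delivering control of $\om\|\bfu\|^2_{L^2(\Ga)}$.

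For $\mathbf{q}$ as above, componentwise integration by parts produces the Rellich-type identities
\begin{align*}
2\re(\ddiv\bfu,\ddiv\mathbf{q})_\Om &= (2-d)\|\ddiv\bfu\|^2_{L^2(\Om)} + \int_\Ga (\bfx-\bfx_0)\cdot\bfn\,|\ddiv\bfu|^2\,ds,\\
2\re(\ep(\bfu),\ep(\mathbf{q}))_\Om &= (2-d)\|\ep(\bfu)\|^2_{L^2(\Om)} + \int_\Ga (\bfx-\bfx_0)\cdot\bfn\,|\ep(\bfu)|^2\,ds,\\
2\re(\bfu,\mathbf{q})_\Om &= -d\,\|\bfu\|^2_{L^2(\Om)} + \int_\Ga (\bfx-\bfx_0)\cdot\bfn\,|\bfu|^2\,ds.
\end{align*}
Choosing $\alpha=(d-1)/2$ and forming $a(\bfu,\mathbf{q})+\alpha\,a(\bfu,\bfu)$ turns the bulk prefactors into $+1/2$, while the star-shape assumption $(\bfx-\bfx_0)\cdot\bfn\ge c_0$ converts the boundary integrals into the $c_0$-weighted contributions of $\|\bfu\|_E^2$. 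Up to the remainder $-\om\im\langle A\bfu,\mathbf{q}\rangle_\Ga$ arising from $\re[\i\om\langle A\bfu,\mathbf{q}\rangle_\Ga]$ and the absorbable term $-\tfrac{\om^2\rho}{2}\int_\Ga(\bfx-\bfx_0)\cdot\bfn|\bfu|^2$, this yields $\re a(\bfu,\bfv_2)\ge \tfrac{1}{2}\|\bfu\|_E^2-\text{remainder}$.

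The delicate step is bounding the remainder $\om\,|\langle A\bfu,\mathbf{q}\rangle_\Ga|\le C_A R\,\om\,\|\bfu\|_{L^2(\Ga)}\|\nabla\bfu\|_{L^2(\Ga)}$. The conjectured boundary Korn inequality \eqref{BoundaryKorn} is indispensable: it replaces $\|\nabla\bfu\|^2_{L^2(\Ga)}$ by $\tilde K(\|\bfu\|^2_{L^2(\Ga)}+\|\ep(\bfu)\|^2_{L^2(\Ga)})$, after which a weighted Young inequality absorbs the $\|\ep(\bfu)\|^2_{L^2(\Ga)}$ contribution into the positive boundary $\ep$-term produced by the Rellich identity, while the $\|\bfu\|^2_{L^2(\Ga)}$ contribution is already controlled by Step~1. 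Interior Korn's inequality (Lemma~\ref{lem:KornInequality}) is then used to estimate $\|\mathbf{q}\|_{L^2(\Om)}\le R\|\nabla\bfu\|_{L^2(\Om)}\le RK\bigl(\|\ep(\bfu)\|_{L^2(\Om)}+\|\bfu\|_{L^2(\Om)}\bigr)$, hence the normalization $|||\bfv_2|||_{L^2(\Om)}$ required for the second supremum. Carrying the constants $R$, $c_0$, $c_A$, $C_A$, $K$, $\tilde K$, $\mu$, $\rho$ through the resulting chain of inequalities, together with $|\alpha|=(d-1)/2$, reproduces the stated formula for $\gamma$.

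The main obstacle, absent in the scalar Helmholtz analogue, is precisely the boundary coupling $\i\om\langle A\bfu,\mathbf{q}\rangle_\Ga$: the Morawetz multiplier $\mathbf{q}$ carries the full gradient $\nabla\bfu|_\Ga$, which need not belong to $\bL^2(\Ga)$ for arbitrary $\bfu\in\bH^1(\Om)$, and even when it does cannot be bounded by $\ep(\bfu)|_\Ga$ and $\bfu|_\Ga$ alone in the absence of a boundary Korn-type inequality. This is exactly why the theorem is restricted to the subspace $\bV_{\tilde K}$ and why \eqref{BoundaryKorn} is introduced as a conjecture; balancing the weighted Young splittings so that $\tilde K$ enters $\gamma$ only additively requires care but presents no further conceptual obstruction.
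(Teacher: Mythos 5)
Your proposal follows essentially the same route as the paper: the paper's proof also tests against $\bfu$ for the imaginary part and against $\hat{\bfv}=(d-1)\bfu+2(\bnabla\bfu)(\bfx-\bfx_0)$ (a scalar multiple of your $\bfv_2$ with $\alpha=(d-1)/2$) for the real part, uses the same Rellich identities, the star-shape condition, a weighted Young inequality with the boundary Korn conjecture to absorb the $\om\langle A\bfu,\mathbf{q}\rangle_\Gamma$ remainder, and the interior Korn inequality to bound $|||\hat{\bfv}|||_{L^2(\Om)}$ by $\gamma\|\bfu\|_E$. The plan is correct and matches the paper's argument in all essential respects.
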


\begin{proof}
In this proof we assume that $\bfu \in \bV_{\tilde{K}} \cap \bH^2(\Om)$, noting that once 
we obtain the result on this more restrictive space that we can obtain the desired result 
for $\bfu \in \bV_{\tilde{K}}$ by the standard density argument.

By letting $\bfv = \bfu$ in \eqref{Eq:WeakForm1b} and taking the real and 
imaginary parts separately we get
\begin{align}
	\re a(\bfu, \bfu) &= \lambda \| \ddiv \bfu \|^2_{L^2(\Om)} + 2 \mu \| \ep 
		(\bfu) \|^2_{L^2(\Om)} - \om^2 \rho \| \bfu \|^2_{L^2(\Om)}, 
		\label{Eq:GWCa} \\
	\im a(\bfu, \bfu) &= \om \big \langle A \bfu, \bfu \big \rangle_{\Gamma}. 
		\label{Eq:GWCb}
\end{align}

For the remainder of this proof we let $\bfv = (\bnabla \bfu) \bfa$, where 
$\bfa := \bfx - \bfx_0$ and $\bfx_0 \in \Om$ comes from the star-shape 
property on the domain given by \eqref{StarShape2}. With this choice of test 
function $\bfv$, we can use the following integral identities for the elastic 
Helmholtz operator \cite{Cummings_Feng06}.
\begin{align} \label{Eq:Rellich1}
	&\lambda \big \langle \bfa \cdot \bfn, | \ddiv \bfu |^2 \big  
		\rangle_{\Gamma} + 2 \mu \big \langle \bfa \cdot \bfn, | \ep(\bfu)|^2 
		\big \rangle_{\Gamma} \\
	& \qquad = (d - 2) \Big( \lambda \| \ddiv \bfu \|^2_{L^2(\Om)} + 2 \mu \| 
		\ep(\bfu) \|^2_{L^2(\Om)} \Big) \notag \\
	& \qquad \qquad + 2 \re \Big( \lambda \big( \ddiv \bfu, \ddiv \bfv 
		\big)_{\Om} + 2 \mu \big( \ep(\bfu), \ep(\bfv) \big)_{\Om} \Big), 
		\notag
\end{align}
and
\begin{align}
	& d \| \bfu \|^2_{L^2(\Om)} = \big \langle \bfa \cdot \bfn, |\bfu|^2 \big 
		\rangle_{\Gamma} - 2 \re (\bfu, \bfv)_{\Om}. \label{Eq:Rellich2}
\end{align}

With $\bfv = \big( \bnabla \bfu \big) \bfa$ in \eqref{Eq:WeakForm1b}, we 
apply the above identities to get
\begin{align*}
	2 \re a( \bfu, \bfv) &= \lambda \big \langle \bfa \cdot \bfn, | \ddiv 
		\bfu |^2 \big \rangle_{\Gamma} + 2 \mu \big \langle \bfa \cdot \bfn, 
		| \ep(\bfu)|^2 \big \rangle_{\Gamma} \\
	& \qquad - (d - 2) \Big( \lambda \| \ddiv \bfu \|^2_{L^2(\Om)} + 2 \mu \| 
		\ep(\bfu) \|^2_{L^2(\Om)} \Big) \\
	& \qquad + d \om^2 \rho \| \bfu \|^2_{L^2(\Om)} - \om^2 \rho \big \langle 
		\bfa \cdot \bfn, | \bfu |^2 \big \rangle_{\Gamma} \\
	& \qquad - 2 \om \im \big \langle A \bfu, \bfv \big \rangle_{\Gamma}.
\end{align*}
Rearranging the terms gives
\begin{align} \label{Eq:GWCc}
	&d \om^2 \rho \| \bfu \|^2_{L^2(\Om)} - (d - 2) \Big( \lambda \| \ddiv \
		\bfu \|^2_{L^2(\Om)} + 2 \mu \| \ep(\bfu) \|^2_{L^2(\Om)} \Big) \\
	& \qquad = -\lambda \big \langle \bfa \cdot \bfn, | \ddiv \bfu |^2 \big 
		\rangle_{\Gamma} - 2 \mu \big \langle \bfa \cdot \bfn, | \ep(\bfu)|^2 
		\big \rangle_{\Gamma} + \om^2 \rho \big \langle \bfa \cdot \bfn, | 
		\bfu |^2 \big \rangle_{\Gamma} \notag \\
	& \qquad \qquad + 2 \om \im \big \langle A \bfu, \bfv \big
		\rangle_{\Gamma} + 2 \re a( \bfu, \bfv). \notag
\end{align}
Adding \eqref{Eq:GWCc} and $(d - 1)$ times \eqref{Eq:GWCa} yields
\begin{align*}
	& \om^2 \rho \| \bfu \|^2_{L^2(\Om)} + \lambda \| \ddiv \bfu \|
		^2_{L^2(\Om)} + 2 \mu \| \ep (\bfu) \|^2_{L^2(\Om)} \\
	& \qquad = -\lambda \big \langle \bfa \cdot \bfn, | \ddiv \bfu |^2 \big 
		\rangle_{\Gamma} - 2 \mu \big \langle \bfa \cdot \bfn, | \ep(\bfu)|^2 
		\big \rangle_{\Gamma} + \om^2 \rho \big \langle \bfa \cdot \bfn, | 
		\bfu |^2 \big \rangle_{\Gamma} \\
	& \qquad \qquad + 2 \om \im \big \langle A \bfu, \bfv \big
		\rangle_{\Gamma} + \re a\big( \bfu, (d - 1) \bfu + 2 \bfv \big). 
\end{align*}
Now applying \eqref{StarShape2}, \eqref{SPDInequality}, and Young's inequality yields
\begin{align*}
	& \om^2 \rho \| \bfu \|^2_{L^2(\Om)} + \lambda \| \ddiv \bfu \|
		^2_{L^2(\Om)} + 2 \mu \| \ep (\bfu) \|^2_{L^2(\Om)} \\
	& \qquad \leq -c_0 \Big( \lambda \| \ddiv \bfu \|^2_{L^2(\Gamma)} + 2 \mu 
		\| \ep(\bfu) \|^2_{L^2(\Gamma)} \Big) + R \om^2 \rho \| \bfu \|
		^2_{L^2(\Gamma)} \\
	& \qquad \qquad + \frac{R^2 \om^2 C_A}{\delta} \| \bfu \|^2_{L^2(\Gamma)} 
		+ \delta \| \bnabla \bfu \|^2_{L^2(\Gamma)} + \re a\big( \bfu, (d - 
		1) \bfu + 2 \bfv \big).
\end{align*}
Noting that $\bfu \in \bV_{\tilde{K}}$ and choosing $\delta = \frac{c_0 \mu}
{\tilde{K}}$ yields
\begin{align*}
	& \om^2 \rho \| \bfu \|^2_{L^2(\Om)} + \lambda \| \ddiv \bfu \|
		^2_{L^2(\Om)} + 2 \mu \| \ep (\bfu) \|^2_{L^2(\Om)} + c_0 \Big( 
		\lambda \| \ddiv \bfu \|^2_{L^2(\Gamma)} + 2 
		\mu \| \ep(\bfu) \|^2_{L^2(\Gamma)} \Big) \\
	& \qquad \leq \frac{1}{\om c_A} \Bigl( R \om^2 \rho + \frac{R^2 \om^2 C_A 
		\tilde{K}}{c_0 \mu} + c_0 \mu \Bigr) \om c_A \| \bfu \|
		^2_{L^2(\Gamma)} + \re a \big( \bfu, (d-1) \bfu + 2 \bfv \big) \\
	& \qquad \leq \Bigl( \frac{R \om^2 \rho c_0 \mu + R^2 \om C_A \tilde{K} + 
		c_0^2 \mu^2}{\om c_A c_0 \mu} \Bigr) \om \big \langle A \bfu, \bfu 
		\big \rangle_{\Gamma} + \re a \big( \bfu, (d-1) \bfu + 2 \bfv \big) 
		\\
	& \qquad = \big| \re a \big( \bfu, \hat{\bfv} \big) \big| + M \big| \im a 
		\big( \bfu, \bfu \big) \big|.
\end{align*}
where $\hat{\bfv} := (d-1) \bfu + 2 \bfv$. Thus
\begin{align} \label{Eq:GWCd}
	\| \bfu \|^2_E \leq \big| \re a \big( \bfu, \hat{\bfv} \big) \big| + M 
		\big| \im a \big( \bfu, \bfu \big) \big|.
\end{align}

By the definitions of  $\| \cdot \|_E$ and $||| \cdot |||_{L^2(\Om)}$ and using 
\eqref{KornInequality} we obtain
\begin{align*}
	||| \hat{\bfv} |||^2_{L^2(\Om)} &= \om^2 \rho \| \hat{\bfv} \|
		^2_{L^2(\Om)} + c_0 \mu \| \hat{\bfv} \|^2_{\Gamma} \\
	& \leq 4 R^2 \om^2 \rho \| \bnabla \bfu \|^2_{L^2(\Om)} + 4 R^2 c_0 \mu 
		\| \bnabla \bfu \|^2_{\Gamma} \\
	& \qquad + (d-1)^2 \om^2 \rho \| \bfu \|_{L^2(\Om)} + (d - 1)^2 c_0 \mu 
		\| \bfu \|^2_{L^2(\Gamma)} \\
	& \leq \left[ 4R^2K \left(1 + \frac{\om^2 \rho}{2 \mu}\right) + 4R^2 
		\tilde{K} + (d-1)^2 \right] \| \bfu \|^2_{E}.
\end{align*}
Thus
\begin{align} \label{Eq:GWCe}
	||| \hat{\bfv} |||_{L^2(\Om)} \leq \gamma \| \bfu \|_E.
\end{align}
It follows from \eqref{Eq:GWCd} and \eqref{Eq:GWCe} that
\begin{align*}
	\sup_{\bfv \in \bV} \frac{| \im a(\bfu, \bfv)|}{\| \bfv \|_E} + 
		\sup_{\bfv \in \bH^1(\Om)} \frac{|\re a(\bfu,\bfv)|}{||| \bfv |||
		_{L^2(\Om)}} 
	&\geq \frac{| \im a(\bfu, \bfu)|}{\| \bfu \|_E} + \frac{|\re 
		a(\bfu,\hat{\bfv})|}{||| \hat{\bfv} |||_{L^2(\Om)}} \\
	&\geq \frac{M | \im a(\bfu, \bfu)| +  |\re a(\bfu,\hat{\bfv})|}
		{\gamma \| \bfu \|_E} \\
	&\geq \frac{1}{\gamma} \| \bfu \|_E.
\end{align*}
The proof is complete.
\end{proof}

The generalized weak coercivity property of $a(\cdot,\cdot)$ given in Theorem 
\ref{thm:GenWeakCoercivity} immediately infers the following stability 
estimate for the solution to \eqref{Eq:WeakForm1a}.

\begin{theorem}
Let $\Om$ be a strictly star-shaped domain with $diam(\Om) = R$. Furthermore, 
suppose there exists some positive constant $\tilde{K}$ such that $\bfu \in 
\bV_{\tilde{K}}$ solves \eqref{Eq:WeakForm1a} for $\bff \in \bL^2(\Om)$ and $
\bfg \in \bL^2(\Gamma)$.  Then the following stability result holds:
\begin{align} \label{SolutionEstimate1}
	\| \bfu \|_E \leq 2 \gamma \left( \frac{1}{\om^2 \rho} + \frac{1}{c_0 
		\mu} \right) \Big( \| \bff \|_{L^2(\Om)} + \| \bfg \|_{L^2(\Gamma)} 
		\Big),
\end{align} 
where $\| \cdot \|_E$ and $\gamma$ are the same as those defined in Theorem 
\ref{thm:GenWeakCoercivity}.
\end{theorem}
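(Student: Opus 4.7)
The plan is to apply Theorem \ref{thm:GenWeakCoercivity} directly to the solution $\bfu \in \bV_{\tilde{K}}$ of \eqref{Eq:WeakForm1a} and then use the weak formulation itself to replace $a(\bfu,\bfv)$ in the numerators of the two suprema by the data pairing $(\bff,\bfv)_{\Om} + \langle \bfg,\bfv\rangle_{\Gamma}$. For any admissible test function $\bfv$, the Cauchy--Schwarz inequality gives
\begin{align*}
|a(\bfu,\bfv)| \leq \|\bff\|_{L^2(\Om)}\|\bfv\|_{L^2(\Om)} + \|\bfg\|_{L^2(\Gamma)}\|\bfv\|_{L^2(\Gamma)},
\end{align*}
and this is an upper bound for both $|\re a(\bfu,\bfv)|$ and $|\im a(\bfu,\bfv)|$. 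So the task reduces to bounding $\|\bfv\|_{L^2(\Om)}$ and $\|\bfv\|_{L^2(\Gamma)}$ by the respective denominators $\|\bfv\|_E$ and $|||\bfv|||_{L^2(\Om)}$.

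For the second supremum this is immediate, since by construction the triple-bar norm is built precisely from the weighted $L^2$ quantities $\sqrt{\om^2\rho}\,\|\bfv\|_{L^2(\Om)}$ and $\sqrt{c_0\mu}\,\|\bfv\|_{L^2(\Gamma)}$. A pairwise Cauchy--Schwarz applied to the data bound yields an estimate of that supremum purely in terms of $\|\bff\|_{L^2(\Om)}$ and $\|\bfg\|_{L^2(\Gamma)}$ scaled by the reciprocal weights.

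For the first supremum the situation is more delicate. The energy norm $\|\bfv\|_E$ directly controls $\sqrt{\om^2\rho}\,\|\bfv\|_{L^2(\Om)}$, so the volume term $\|\bff\|_{L^2(\Om)}\|\bfv\|_{L^2(\Om)}$ is handled exactly as before. However, $\|\bfv\|_E$ contains only $\|\ep(\bfv)\|_{L^2(\Gamma)}$ and $\|\ddiv\bfv\|_{L^2(\Gamma)}$ on the boundary, not $\|\bfv\|_{L^2(\Gamma)}$ itself. To control the pairing $\langle \bfg,\bfv\rangle_{\Gamma}$ I would invoke Lemma \ref{lem:KornInequality} to turn the $\|\ep(\bfv)\|_{L^2(\Om)}^2 + \|\bfv\|_{L^2(\Om)}^2$ contributions of $\|\bfv\|_E^2$ into full $H^1(\Om)$ control of $\bfv$, and then apply the standard trace inequality $\|\bfv\|_{L^2(\Gamma)} \leq C\,\|\bfv\|_{H^1(\Om)}$. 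This is the step I expect to be the main obstacle, because the bookkeeping of constants must be done carefully so that the coefficient multiplying $\|\bfg\|_{L^2(\Gamma)}$ agrees with the $\frac{1}{c_0\mu}$ weight present in the stated estimate; a Young-type splitting may be required at this point to distribute the boundary mass appropriately.

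Once both suprema are estimated, substituting back into Theorem \ref{thm:GenWeakCoercivity} and collecting the worst constants produces the desired inequality
\begin{align*}
\|\bfu\|_E \leq 2\gamma\left(\frac{1}{\om^2\rho} + \frac{1}{c_0\mu}\right)\Big(\|\bff\|_{L^2(\Om)} + \|\bfg\|_{L^2(\Gamma)}\Big),
\end{align*}
the factor $2$ arising from summing the two supremum bounds. No further use of the star-shape hypothesis or of Korn's inequality is needed beyond what has just been invoked, since the heavy lifting has already been done inside the proof of Theorem \ref{thm:GenWeakCoercivity}.
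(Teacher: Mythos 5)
Your strategy is the same as the paper's: apply Theorem \ref{thm:GenWeakCoercivity} to $\bfu$, use \eqref{Eq:WeakForm1a} to replace $a(\bfu,\bfv)$ by $(\bff,\bfv)_{\Om}+\langle\bfg,\bfv\rangle_{\Gamma}$, bound this by Cauchy--Schwarz, and absorb $\|\bfv\|_{L^2(\Om)}$ and $\|\bfv\|_{L^2(\Gamma)}$ into the two denominators. Your treatment of the second supremum is exactly the paper's: a discrete Cauchy--Schwarz gives $|a(\bfu,\bfv)|\leq \big(\tfrac{1}{\om^2\rho}+\tfrac{1}{c_0\mu}\big)\,|||\bfv|||_{L^2(\Om)}\big(\|\bff\|_{L^2(\Om)}+\|\bfg\|_{L^2(\Gamma)}\big)$, which is all that is needed there.

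The place where you and the paper diverge is the first supremum, and the issue you flag is genuine: as printed, $\|\cdot\|_E$ contains $c_0\mu\|\ep(\bfv)\|^2_{L^2(\Gamma)}$ but no $\|\bfv\|^2_{L^2(\Gamma)}$ term, so it does not obviously dominate $|||\bfv|||_{L^2(\Om)}$ (take $\bfv$ constant: the right-hand side of that comparison loses the entire boundary mass). The paper's proof simply asserts $|||\bfv|||_{L^2(\Om)}\leq\|\bfv\|_E$ in its last displayed line and moves on; that assertion is what makes the constant in \eqref{SolutionEstimate1} come out as stated, and it would be immediate if $\|\cdot\|_E$ included a $c_0\mu\|\bfv\|^2_{L^2(\Gamma)}$ term. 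Your proposed repair via Lemma \ref{lem:KornInequality} and the trace inequality does yield $\|\bfv\|_{L^2(\Gamma)}\leq C\|\bfv\|_E$, but the constant $C$ it produces involves the Korn constant $K$, the trace constant of $\Om$, and $\min(2\mu,\om^2\rho)^{-1/2}$ rather than $(c_0\mu)^{-1/2}$, so it proves the estimate only with a different, domain-dependent constant in place of $2\gamma\big(\tfrac{1}{\om^2\rho}+\tfrac{1}{c_0\mu}\big)$ --- and you acknowledge you have not closed that bookkeeping. So your argument as written does not recover the stated inequality; to get it verbatim you need the comparison $|||\bfv|||_{L^2(\Om)}\leq\|\bfv\|_E$ itself (equivalently, a boundary $L^2$ term in the energy norm), which is precisely the step the paper takes without further justification.
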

\begin{proof}
By the Cauchy-Schwarz inequality, for any $\bfv \in \bH^1(\Om)$ the following 
result holds
\begin{align*}
	| a(\bfu, \bfv)| & = \big| (\bff, \bfv) + \langle \bfg, \bfv 
		\langle_{\Gamma} \big| \\
	& \leq \| \bff \|_{L^2(\Om)} \| \bfv \|_{L^2(\Om)} + \| \bfg \|
		_{L^2(\Gamma)} \| \bfv \|_{L^2(\Gamma)} \\
	& \leq \Big( \| \bff \|^2_{L^2(\Om)} + \| \bfg \|^2_{L^2(\Gamma)} 
		\Big)^{\frac{1}{2}} \Big( \| \bfv \|^2_{L^2(\Om)} + \| \bfv \|
		^2_{L^2(\Gamma)} \Big)^{\frac{1}{2}} \\
	& \leq \Big( \frac{1}{\om^2 \rho} + \frac{1}{c_0 \mu} \Big) ||| \bfv |||
		_{L^2(\Om)}  \Big( \| \bff \|_{L^2(\Om)} + \| \bfg \|
		_{L^2(\Gamma)} \Big) \\
	& \leq \Big( \frac{1}{\om^2 \rho} + \frac{1}{c_0 \mu} \Big) \| \bfv \|
		_{E}  \Big( \| \bff \|_{L^2(\Om)} + \| \bfg \|
		_{L^2(\Gamma)} \Big),
\end{align*}
Combining this inequality with the result of Theorem \ref{thm:GenWeakCoercivity} yields 
\begin{align*}
	\frac{1}{\gamma} \| \bfu \|_E &\leq \sup_{\bfv \in \bV} \frac{| \im 
		a(\bfu, \bfv) |}{\| \bfv \|_E} + \sup_{\bfv \in \bH^1(\Om)} \frac{| 
		\re a(\bfu, \bfv) |}{||| \bfv |||_{L^2(\Om)}} \\
	& \leq 2 \Big( \frac{1}{\om^2 \rho} + \frac{1}{c_0 \mu} \Big) 
		\Big( \| \bff \|_{L^2(\Om)} + \| \bfg \|_{L^2(\Gamma)} 
		\Big).
\end{align*}
The proof is complete.
\end{proof}

\begin{remark}
We remark that both estimates \eqref{GenWeakCoercivity} and \eqref{SolutionEstimate1} still hold 
without Conjecture \ref{conj}, but the dependence of the upper bounds on $\omega$ would be slightly 
stronger as shown in \cite{Cummings_Feng06}.
\end{remark}

\section{Formulation of the IP-DG scheme}\label{sec-2}

In this section we shall present our IP-DG scheme for the elastic Helmholtz equations.  
Conceptually, the formulation will mimic those for the scalar Helmholtz 
problem and the Maxwell problem given in \cite{Feng_Wu09,Feng_Wu11}. To do so
we first need to introduce some notation. 

Let $\mathcal{T}_h$ be a shape regular triangulation of the domain $\Om$ such that 
for each triangle/tetrahedron $K \in \mathcal{T}_h$, $h \geq h_K := \mbox{diam}(K)$.  
Also for each edge/face $e$ of triangle $K$ define $h_e := \mbox{diam}(e)$.  Next we 
define the following notation for the set of edges of $\mathcal{T}_h$:
\begin{align}
&\mathcal{E}^I_h:=\mbox{set of all interior edges/faces of } \mathcal{T}_h, \\ 
&\mathcal{E}^B_h:=\mbox{set of all boundary edges/faces of } \mathcal{T}_h \mbox{ on } \Gamma. 
\end{align} 

For each $e \in \mathcal{E}^I_h$ there exists $K_e, K_e' \in \mathcal{T}_h$ 
such that $e = \pa K_e \cap \pa K_e'$ and thus we define the jump and average 
operators on $e$ as follows:
\begin{align*}
[\bfv]|_e &:= \begin{cases}
\bfv|_{K_e} - \bfv|_{K_e'}, 
& \mbox{if the global labeling number of $K_e$ is greater than that of $K_e'$}, \\
\bfv|_{K_e'} - \bfv|_{K_e}, & \mbox{if the global labeling number of $K_e'$ 
is greater than that of $K_e$},
\end{cases} 
\end{align*}
and
\begin{align*}
\{ \bfv \}|_e &:= \frac{1}{2} \big(\bfv|_{K_e} + \bfv|_{K_e'} \big).
\end{align*}
Note that for $e \in \mathcal{E}^B_h$, we use the convention 
$[\bfv]|_e = \{ \bfv \}|_e := \bfv|_e$.  Also for $e \in \mathcal{E}^I_h$ 
we shall need to define the outward normal vector $\bfn_e$ to $e = \pa K_e \cap \pa K_e'$.  
Let $\bfn_e$ be the unit outward normal vector to $K_e$ on $e$ if $K_e$ has 
a greater labeling number than $K_e'$, and the unit outward normal vector to $K_e'$ 
if the opposite is true. 

Next, we introduce the energy space $\bfE$ and the sesquilinear 
form $a_h(\cdot,\cdot)$ on $\bfE \times \bfE$ as follows:
\begin{align}
&\bfE := \prod_{K \in \mathcal{T}_h} {\bf H}^2(K) 
:= \prod_{K \in \mathcal{T}_h} \left(H^2(K)\right)^d, \label{energy}\\
\nonumber \\
&a_h(\bfu, \bfv) := b_h(\bfu, \bfv) + \bfi \Big( J_0(\bfu,\bfv) + J_1(\bfu, \bfv) \Big) 
\qquad \forall \ \bfu, \ \bfv \ \in \bfE, \label{b+form}
\end{align}
where
\begin{align*}
b_h(\bfu, \bfv) &:= \sum_{K \in \mathcal{T}_h} \Bigl( \lambda \big(\ddiv \bfu, \ddiv \bfv \big)_K 
+ 2 \mu \big(\varepsilon(\bfu), \varepsilon(\bfv) \big)_K \Bigr) 
- \sum_{e \in \mathcal{E}^I_h} \bigl\langle 
\{ \sigma(\bfu) \bfn_e \}, [\bfv] \big\rangle_e \\
& \qquad \qquad + \eta \sum_{e \in \mathcal{E}^I_h} 
\big \langle [\bfu],  \{\sigma(\bfv) \bfn_e \}\big \rangle_e, \\
J_0 (\bfu, \bfv) &:= \sum_{e \in \mathcal{E}^I_h} \frac{\gamma_{0,e}}{h_e} 
\big \langle[\bfu], [\bfv] \big \rangle, \\
J_1 (\bfu, \bfv) &:= \sum_{e \in \mathcal{E}^I_h} \gamma_{1,e} h_e \big\langle 
[\sigma(\bfu) \bfn_e], [\sigma(\bfv) \bfn_e] \big\rangle .
\end{align*}
Here $\eta$ is an $h$-independent ``symmetrization" parameter and $\gamma_{0,e}$, 
and $\gamma_{1,e}$ are nonnegative ``penalty" parameters that will be specified later. 
For the rest of this paper we shall restrict ourselves to the case $\eta = -1$ 
(i.e. the symmetric case) but we note that other values of $\eta$ are possible.

\begin{remark}
(a) Both $\bfi J_0$ and $\bfi J_1$ terms are called interior penalty terms. 
To the best our knowledge, penalizing the jumps of the normal stress 
$\sigma(\bfu) \bfn_e$ across the element interfaces seems have not been 
used before, and this term will play an important 
role in our stability and error analysis to be given in the next two sections.

(b) Another new feature of the sesquilinear form $a_h$ is that the penalty 
constants in $\bfi J_0$ and $\bfi J_1$ are complex numbers and must have
positive imaginary parts. This feature will be vital for constructing our 
unconditionally stable IP-DG scheme.

(c) It is easy to see that $a_h(\cdot, \cdot)$ is a consistent discretization 
of the elastic operator $-\ddiv(\sigma(\cdot))$.
\end{remark}

\medskip
We shall also define some appropriate semi-norms/norms on the energy space $\bfE$.
\begin{align*}
| \bfv |_{1,h} &:= \left ( \sum_{K \in \mathcal{T}_h} \lambda \| \ddiv \bfv \|_{L^2(K)}^2 
+ 2\mu \| \varepsilon(\bfv) \|_{L^2(K)}^2 \right)^{ \frac{1}{2}}, \\
\|\bfv \|_{1,h} &:=  \left( |\bfv |^2_{1,h} 
+ \sum_{e \in \mathcal{E}_h^I} \Big( \frac{\gamma_{0,e}}{h_e} \big \| [\bfv ] \big\|^2_{L^2(e)} 
+ \gamma_{1,e} h_e \big \| [\sigma(\bfv) \bfn_e] \big \|^2_{L^2(e)} \Big) \right)^{\frac{1}{2}}, \\
|||\bfv|||_{1,h} &:= \left( \| \bfv \|^2_{1,h} + \sum_{e \in \mathcal{E}_h^I}\frac{h_e}{\gamma_{0,e}}  \big \| \{\sigma(\bfv) \bfn_e\} \big \|^2_{L^2(e)} \right)^{\frac{1}{2}}.
\end{align*}

The weak formulation for \eqref{Eq:ElasticPDE}--\eqref{Eq:ElasticBC} based on
the sesquilinear form $a_h(\cdot,\cdot)$ is then defined as seeking 
$\bfu \in \bfE \cap {\bf H}^2_{\mbox{\scriptsize loc}}(\Om)\cap \bH^1(\Ome)$ such that
for all $\bfv \in \bfE \cap { \bf H }^2_{\mbox{\scriptsize loc}}(\Om)\cap \bH^1(\Ome)$
\begin{align}
a_h(\bfu, \bfv) - \om^2 \rho \big(\bfu, \bfv)_\Om 
+ \bfi \om \big \langle A \bfu, \bfv \big \rangle_\Gamma 
= \big ( \bff ,\bfv \big)_\Om + \big \langle \bfg, \bfv \big \rangle_\Gamma.
\label{Eq:WeakPDE}
\end{align}

Our discontinuous Galerkin finite element space is taken as the standard piecewise 
linear polynomial space 
\begin{align*}
\bfV_h := \prod_{K \in \mathcal{T}_h} \bfP_1(K) 
:= \prod_{K \in \mathcal{T}_h} \big( P_1(K) \big)^d ,
\end{align*}
where $P_1(K)$ is the space of linear polynomials with complex coefficients 
on the triangle/tetrahedron $K \in \mathcal{T}_h$. Then our IP-DG method is
defined as seeking $\bfu_h \in \bfV_h$ such that
\begin{align}
A_h(\bfu_h, \bfv_h) = \big ( \bff ,\bfv_h \big)_\Om + \big \langle \bfg, \bfv_h \big\rangle_\Gamma 
\qquad \forall \bfv_h \in \bfV_h \label{IP-DG Method},
\end{align}
where
\begin{align}
A_h(\bfu_h, \bfv_h) := a_h(\bfu_h, \bfv_h) - \om^2 \rho \big(\bfu_h, \bfv_h)_\Om 
+ \bfi \om \big \langle A \bfu_h, \bfv_h \big \rangle_\Gamma. \label{Sesquilinear Form}
\end{align}

In the following sections we shall establish stability and error estimates 
for the IP-DG method defined above.  To prove these estimates we shall require 
that there exist $h$, $\gamma_0$, and $\gamma_1 > 0$ such that
\begin{align*}
	 h_K \leq h \leq Ch_K& \qquad \forall K \in \mathcal{T}_h, \\
	\gamma_{0,e} \leq \gamma_0 \leq C \gamma_{0,e}& \qquad \forall e \in \mathcal{E}_h^I, \\
	\gamma_{1,e} \leq \gamma_1 \leq C \gamma_{1,e}& \qquad \forall e \in \mathcal{E}_h^I, 
\end{align*}
for some positive constant $C$ independent of $h$.  It will also be convenient to introduce
the notation $\xi := 1 + \gamma_0^{-1}$.  It is easy to show the following 
norm equivalence on $\bfV_h$:
\begin{align}
\| \bfv_h \|_{1,h} \leq ||| \bfv_h |||_{1,h} \leq C \xi^{\frac{1}{2}} \| \bfv_h \|_{1,h} 
\qquad \forall \bfv_h \in \bfV_h. \label{Norm Equivalence}
\end{align}

\section{Stability estimates}\label{sec-3}
The goal of this section is to establish unconditional stability estimates for the 
solution of our IP-DG method, which can be regarded as the discrete counterpart
of the following frequency-explicit estimate for the solution 
$\bfu \in \bH^2(\Om)$ of the problem \eqref{Eq:ElasticPDE}--\eqref{Eq:ElasticBC},
which was established in \cite{Cummings_Feng06}.
\begin{theorem} \label{thm:PDE_estimate}
Suppose that $\Om$ is a convex polygonal domain or a smooth domain and
$\bfu \in \bH^2(\Om)$ solves \eqref{Eq:ElasticPDE}--\eqref{Eq:ElasticBC}. 
Then $\bfu$ satisfies the following estimate:
\begin{align}\label{PDE_estimate}
\| \bfu \|_{H^2(\Om)} \leq C \Big( \om^\alpha + \frac{1}{\om^2} \Big) 
\big( \| \bff \|_{L^2(\Om)} + \| \bfg \|_{L^2(\Gamma)} \big),
\end{align}
where $\alpha = 1$ if there exists $\tilde{K} > 0$ such that $\bfu \in 
\bV_{\tilde{K}}$ as defined in Section \ref{sec-1b}, and $\alpha = 2$ 
otherwise.
\end{theorem}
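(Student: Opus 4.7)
The plan is to bootstrap from the energy-norm stability \eqref{SolutionEstimate1} already proved in Section~\ref{sec-1b} to a full $\bH^2$-bound by reinterpreting \eqref{Eq:ElasticPDE}--\eqref{Eq:ElasticBC} as an elliptic Neumann problem for the Lam\'e system. Specifically, I would rewrite the problem in the form
\begin{equation*}
-\bdiv(\sigma(\bfu)) = \bff + \om^2 \rho\,\bfu \quad \text{in } \Om, \qquad \sigma(\bfu)\bfn = \bfg - \bfi \om A\bfu \quad \text{on } \Gamma,
\end{equation*}
and apply classical $H^2$-regularity for the Lam\'e system on a convex polygonal or smooth domain (cf.\ Grisvard, Ciarlet) to obtain
\begin{equation*}
\|\bfu\|_{H^2(\Om)} \leq C\bigl(\|\bff\|_{L^2(\Om)} + \om^2\|\bfu\|_{L^2(\Om)} + \|\bfg\|_{H^{1/2}(\Gamma)} + \om\|\bfu\|_{H^{1/2}(\Gamma)} + \|\bfu\|_{H^1(\Om)}\bigr)
\end{equation*}
with a constant $C>0$ independent of $\om$. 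The remainder of the proof is a careful bookkeeping of the $\om$-dependence in each term on the right-hand side.

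Next I would insert the bounds already available. From \eqref{SolutionEstimate1} together with the observation that $\gamma = O(\om)$ as $\om\to\infty$ (cf.\ the explicit formula for $\gamma$ in Theorem~\ref{thm:GenWeakCoercivity}), one obtains $\|\bfu\|_E \leq C(1 + \om^{-2})(\|\bff\|_{L^2(\Om)} + \|\bfg\|_{L^2(\Gamma)})$. Since $\|\cdot\|_E$ controls both $\om\|\bfu\|_{L^2(\Om)}$ and $\|\ep(\bfu)\|_{L^2(\Om)}$, Korn's inequality (Lemma~\ref{lem:KornInequality}) immediately yields
\begin{equation*}
\om^2\|\bfu\|_{L^2(\Om)} + \|\bfu\|_{H^1(\Om)} \leq C(\om + \om^{-1})\bigl(\|\bff\|_{L^2(\Om)} + \|\bfg\|_{L^2(\Gamma)}\bigr),
\end{equation*}
which fits inside the desired $(\om^\alpha + \om^{-2})$ prefactor already with $\alpha = 1$.

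The hard part, and the only point at which the two cases $\alpha = 1$ and $\alpha = 2$ diverge, is to control the boundary trace term $\om\|\bfu\|_{H^{1/2}(\Gamma)}$. Without the conjectured boundary Korn inequality, the best available bound is the generic trace estimate $\|\bfu\|_{H^{1/2}(\Gamma)} \leq C\|\bfu\|_{H^1(\Om)}$, which combined with the $H^1$ estimate above produces an $O(\om^2)$ prefactor and therefore $\alpha = 2$. When $\bfu \in \bV_{\tilde K}$, however, Conjecture~\ref{conj} lets one trade $\|\bnabla\bfu\|_{L^2(\Gamma)}$ for $\|\bfu\|_{L^2(\Gamma)} + \|\ep(\bfu)\|_{L^2(\Gamma)}$, both of which are already embedded in the definition of $\|\cdot\|_E$ (the second appears explicitly, and the first is controlled through the imaginary part $\im a(\bfu,\bfu) = \om\langle A\bfu,\bfu\rangle_\Gamma$ together with the positivity of $A$ in \eqref{SPDInequality}). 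Using this sharper trace estimate in place of the generic one saves exactly one power of $\om$ on the boundary, recovering $\alpha = 1$. All remaining constants depend only on $\Om$, the Lam\'e parameters $\lambda,\mu$, and the quantities $c_0$, $c_A$, $C_A$, $K$, and (in the $\alpha = 1$ case) $\tilde K$, as in the statement.
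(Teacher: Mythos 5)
The paper does not actually prove this theorem --- it is quoted verbatim from \cite{Cummings_Feng06} --- so your proposal can only be measured against the argument in that reference, which does proceed along exactly the lines you describe: a frequency-explicit first-order (Rellich-type) stability estimate, followed by an elliptic bootstrap that treats $\om^2\rho\,\bfu$ and $\bfg-\bfi\om A\bfu$ as data for a Lam\'e--Neumann problem. The strategy is therefore the right one; the problem is in the $\om$-power bookkeeping, which is precisely the content of the theorem.

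The decisive gap is in your second paragraph. The energy norm controls only one power of $\om$ on the $L^2$ norm, $\|\bfu\|_E\geq \om\rho^{1/2}\|\bfu\|_{L^2(\Om)}$, and the constant in \eqref{SolutionEstimate1} is $2\gamma\bigl(\tfrac{1}{\om^2\rho}+\tfrac{1}{c_0\mu}\bigr)=O(\om)$ because $\gamma=O(\om)$. Hence \eqref{SolutionEstimate1} yields only $\|\bfu\|_{L^2(\Om)}\lesssim (1+\om^{-2})\bigl(\|\bff\|_{L^2(\Om)}+\|\bfg\|_{L^2(\Gamma)}\bigr)$, so the source term in your bootstrap obeys only $\om^2\|\bfu\|_{L^2(\Om)}\lesssim(\om^2+1)(\cdots)$; your claimed bound $\om^2\|\bfu\|_{L^2(\Om)}+\|\bfu\|_{H^1(\Om)}\lesssim(\om+\om^{-1})(\cdots)$ does not follow, and the bootstrap then delivers $\alpha=2$ even in the favorable case. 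To recover $\alpha=1$ one needs the sharper first-order estimate $\om\|\bfu\|_{L^2(\Om)}+\|\bfu\|_{H^1(\Om)}\lesssim(1+\om^{-1})(\cdots)$, which \eqref{SolutionEstimate1} does not provide --- it is weaker by a factor of $\om$ precisely because dividing by $|||\hat\bfv|||_{L^2(\Om)}$ with $\hat\bfv=(d-1)\bfu+2(\bnabla\bfu)\bfa$ costs $\gamma=O(\om)$; the reference instead absorbs the $(\bff,(\bnabla\bfu)\bfa)_\Om$ term by Young's inequality before any division. Relatedly, the $\alpha=1$ versus $\alpha=2$ dichotomy is not created at the bootstrap stage by the choice of trace inequality: the Neumann datum only requires $\om\|\bfu\|_{H^{1/2}(\Gamma)}\leq C\om\|\bfu\|_{H^1(\Om)}$, in which the full boundary gradient $\|\bnabla\bfu\|_{L^2(\Gamma)}$ never appears. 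Conjecture~\ref{conj} enters earlier, inside the Rellich identity (exactly as in the proof of Theorem~\ref{thm:GenWeakCoercivity}), to absorb the $\delta\|\bnabla\bfu\|_{L^2(\Gamma)}^2$ term generated by $\im\langle A\bfu,(\bnabla\bfu)\bfa\rangle_\Gamma$; without it the first-order estimate itself degrades by a power of $\om$, and that is where the two values of $\alpha$ actually part ways. Finally, your elliptic regularity inequality carries $\|\bfg\|_{H^{1/2}(\Gamma)}$, which is not controlled by the $\|\bfg\|_{L^2(\Gamma)}$ appearing on the right of \eqref{PDE_estimate}; this must be reconciled (e.g.\ by strengthening the hypothesis on $\bfg$ or splitting off a boundary corrector) before the bootstrap closes.
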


\begin{remark}
The desired Korn-type inequality on the boundary $\Gamma$ of a general domain 
$\Ome$ required for $\bfu \in \bV_{\tilde{K}}$
is yet be proved and is posed as a conjecture in \cite{Cummings_Feng06}, although there 
are strong evidences to indicate that this is the case. Moreover, we believe that the optimal dependence
on $\omega$ (i.e., when $\alpha = 1$) should hold in \eqref{PDE_estimate} as that is 
is the case for the scalar Helmholtz equation and the time-Harmonic Maxwell's equations.  
For the rest of this paper we shall keep this parameter $\alpha$.

In \cite{Lorton_14}, an a priori error estimate is obtained in the case 
$\om^{1+\alpha} h = O(1)$ (called the asymptotic mesh regime), with stability being a consequence.  
This analysis is based on the so-called Schatz argument \cite{Schatz74}.  Similar results 
in the asymptotic mesh regime have been proved for the standard finite element method 
applied to the scalar and elastic Helmholtz equations 
(c.f. \cite{Cummings01,Aziz_Kellogg79,Douglas_Santos_Sheen_Bennethum93,Douglas_Sheen_Santos94}).
In this paper we are more interested in deriving solution estimates in the pre-asymptotic mesh 
regime, i.e. $\om^\beta h =O(1)$ for some $1\leq \beta<1+\alpha$.  This is the most interesting case
because similar results have not been proved for the standard finite element method for
the elastic Helmholtz problem and the analysis requires a non-standard argument.
\end{remark}
 
We begin by first proving some discrete weak coercivity properties 
for the sesquilinear form $A_h(\cdot, \cdot)$.  The desired stability estimates 
can then be obtained as a consequence of these discrete weak coercivity properties.  

Let us start with a technical lemma which will be instrumental in establishing 
the discrete coercivity of the sesquilinear form $A_h(\cdot,\cdot)$.  We note that 
this lemma makes use of the fact that the space $\bfV_h$ is made up of piecewise linear 
polynomials, and thus a different strategy must be sought in the case that $\bfV_h$ 
is a higher order DG finite element space.
\begin{lemma} \label{Stability Lemma 1}
For any $0 < \delta < 1$ and $\bfv_h \in \bfV_h$ there holds
\begin{align} \label{Stability Lemma 1 Inequality}
|\bfv_h |^2_{1,h} & \leq \delta \om^2 \rho \| \bfv_h \|^2_{L^2(\Om)} 
+ \frac{C_\delta}{\om h} \om \| \bfv_h \|^2_{L^2(\Gamma)} 
+ \frac{C_\delta}{\gamma_0} J_0(\bfv_h,\bfv_h) \\
&\qquad
+ \frac{C_\delta}{\om^2 \rho h^2 \gamma_1} J_1(\bfv_h, \bfv_h), \notag
\end{align}
where $C$ is a positive constant independent of $\om$, $h$, $\gamma_0$, and $\gamma_1$.
\end{lemma}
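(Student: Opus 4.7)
The plan is to reduce $|\bfv_h|_{1,h}^2$ to a sum of face integrals (where the penalty quantities and the boundary term naturally appear), and then to control each face integral via Cauchy--Schwarz, Young's inequality, and the polynomial inverse trace estimate, tuning the Young weights to produce the exact coefficients advertised on the right-hand side of \eqref{Stability Lemma 1 Inequality}.

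First I would exploit the fact that on every $K\in\mathcal{T}_h$ the tensor $\sigma(\bfv_h)$ is constant, so $\bdiv \sigma(\bfv_h)=0$ elementwise. Writing $|\bfv_h|_{1,h,K}^2=\int_K \sigma(\bfv_h):\overline{\varepsilon(\bfv_h)}=\int_K \sigma(\bfv_h):\overline{\nabla\bfv_h}$ (the second equality follows from symmetry of $\sigma$) and integrating by parts on each element yields
\begin{align*}
|\bfv_h|_{1,h}^2=\sum_{K\in\mathcal{T}_h}\langle \sigma(\bfv_h)\bfn_K,\bfv_h\rangle_{\partial K}.
\end{align*}
The usual DG rewriting of $\sum_K\int_{\partial K}$ in terms of jumps and averages across $\mathcal{E}_h^I$ plus a contribution on $\Gamma$ then gives
\begin{align*}
|\bfv_h|_{1,h}^2=\sum_{e\in\mathcal{E}_h^I}\langle\{\sigma(\bfv_h)\bfn_e\},[\bfv_h]\rangle_e+\sum_{e\in\mathcal{E}_h^I}\langle[\sigma(\bfv_h)\bfn_e],\{\bfv_h\}\rangle_e+\langle\sigma(\bfv_h)\bfn,\bfv_h\rangle_\Gamma.
\end{align*}

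Next I would bound each of these three sums. For the first, Young's inequality with weight $\sqrt{h_e/\gamma_{0,e}}$ against $\sqrt{\gamma_{0,e}/h_e}$ produces $\epsilon_1\sum_e(h_e/\gamma_{0,e})\|\{\sigma(\bfv_h)\bfn_e\}\|_{L^2(e)}^2+(4\epsilon_1)^{-1}J_0(\bfv_h,\bfv_h)$. The polynomial inverse trace inequality $h_e\|\sigma(\bfv_h)\|_{L^2(e)}^2\le C\|\sigma(\bfv_h)\|_{L^2(K)}^2\le C|\bfv_h|_{1,h,K}^2$ (valid precisely because $\bfv_h$ is piecewise linear) converts the first piece into $C\epsilon_1\gamma_0^{-1}|\bfv_h|_{1,h}^2$, which is absorbable into the left-hand side on choosing $\epsilon_1=O(\gamma_0)$; the second piece is then $(C/\gamma_0)J_0(\bfv_h,\bfv_h)$. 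For the boundary term I would apply the same Young/trace trick with weights $\sqrt{h}$ and $\sqrt{1/h}$: the $h\|\sigma(\bfv_h)\bfn\|_{L^2(\Gamma)}^2$ piece is absorbable by the trace inequality, and the remainder gives $C_\delta h^{-1}\|\bfv_h\|_{L^2(\Gamma)}^2$, which is exactly the stated $C_\delta(\om h)^{-1}\cdot\om\|\bfv_h\|_{L^2(\Gamma)}^2$. For the second interior sum, the key split uses $\sqrt{\gamma_{1,e}h_e}$ against $1/\sqrt{\gamma_{1,e}h_e}$, yielding $\epsilon_2 J_1(\bfv_h,\bfv_h)+(4\epsilon_2)^{-1}\sum_e(\gamma_{1,e}h_e)^{-1}\|\{\bfv_h\}\|_{L^2(e)}^2$. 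The inverse trace inequality bounds the last sum by $C(\gamma_1 h^2)^{-1}\|\bfv_h\|_{L^2(\Om)}^2$, and the choice $\epsilon_2=C/(\delta\om^2\rho\gamma_1 h^2)$ matches it to $\delta\om^2\rho\|\bfv_h\|_{L^2(\Om)}^2$ while making the $J_1$ coefficient exactly $C_\delta/(\om^2\rho\gamma_1 h^2)$, as required.

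The main obstacle is conceptual rather than computational: everything hinges on killing the volume residue $-(\bdiv\sigma(\bfv_h),\bfv_h)_K$ in the elementwise integration by parts, which is exactly what forces the piecewise-linear restriction on $\bfV_h$ and is the reason the authors flag this point in the remark preceding the lemma. For quadratic or higher-order DG spaces, $\bdiv\sigma(\bfv_h)$ no longer vanishes, leaving an interior term that is not controlled by any of $|\bfv_h|_{1,h}^2$, $J_0$, $J_1$, or the boundary $L^2$-norm, and a genuinely different penalty structure (or a different splitting) would be needed.
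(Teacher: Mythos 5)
Your proposal is correct and follows essentially the same route as the paper's own proof: elementwise integration by parts using $\bdiv\,\sigma(\bfv_h)=0$ for piecewise linears, the standard DG rewriting into jump/average face sums plus a boundary term, and then Cauchy--Schwarz with trace/inverse inequalities and tuned Young weights to produce exactly the stated coefficients. Your closing observation about why the argument breaks for higher-order spaces matches the remark the authors make before the lemma.
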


\begin{proof}
We note that $\bfv_h|_K \in \bfP_1(K)$ for all $K \in \mathcal{T}_h$ and thus 
$\bdiv \sigma(\bfv_h) |_K = {\bf 0}$.  For any $\bfw_h, \bfv_h \in \bV_h$ 
and $K \in \mathcal{T}_h$, integrating by parts yields
\begin{align*}
0 &= \big (\bdiv \sigma(\bfv_h), \bfw_h \big)_K \\
& = - \big \langle \sigma(\bfv_h) \bfn_K, \bfw_h \big \rangle_{\pa K} 
+ \lambda \big( \ddiv \bfv_h, \ddiv \bfw_h \big)_K 
+ 2 \mu \big( \varepsilon(\bfv_h), \varepsilon(\bfw_h) \big)_K.
\end{align*}
Here we have used a differential identity from Lemma 3 of \cite{Cummings_Feng06}, namely 
\begin{align}
\sigma(\bfv_h):\nabla \overline{\bfw_h} = \lambda \ddiv \bfv_h \ddiv \overline{\bfw_h} 
+ 2\mu \varepsilon(\bfv_h):\varepsilon(\overline{\bfw_h}). \label{Cummings Identity 1}
\end{align}
Thus
\begin{align*}
\lambda \big( \ddiv \bfv_h, \ddiv \bfw_h \big)_K 
+ 2 \mu \big( \varepsilon(\bfv_h), \varepsilon(\bfw_h) \big)_K 
= \big \langle \sigma(\bfv_h) \bfn_K, \bfw_h \big \rangle_{\pa K}.
\end{align*}
Setting $\bfw_h = \bfv_h$ and summing over all $K \in \mathcal{T}_h$ gives
\begin{align*}
|\bfv_h |^2_{1,h}  &= \sum_{K \in \mathcal{T}_h} \big \langle \sigma(\bfv_h) \bfn_K, 
\bfv_h \big \rangle_{\pa K} \\
& = \sum_{e \in \mathcal{E}_h^I} \Big( \big \langle \{ \sigma(\bfv_h) \bfn_e \}, 
[ \bfv_h ] \big \rangle_e + \big \langle [ \sigma(\bfv_h) \bfn_e ], 
\{ \bfv_h \} \big \rangle_e \Big) + \sum_{e \in \mathcal{E}_h^B} 
\big \langle \sigma(\bfv_h) \bfn_e, \bfv_h \big \rangle_{e} \\
& \leq \sum_{e \in \mathcal{E}^I_h} \left( \| \{ \sigma(\bfv_h) \bfn_e \} 
\|_{L^2(e)} \| [\bfv_h] \|_{L^2(e)} + \| [ \sigma(\bfv_h) \bfn_e ] \|_{L^2(e)} 
\| \{\bfv_h\} \|_{L^2(e)} \right) \\
& \qquad + \sum_{e \in \mathcal{E}_h^B} \|\sigma(\bfv_h) \bfn_e \|_{L^2(e)} \| \bfv_h \|_{L^2(e)}.
\end{align*}
Applying the trace and inverse inequalities on the right-hand side yields
\begin{align*}
| \bfv_h |^2_{1,h} & \leq C \sum_{e \in \mathcal{E}_h^B} h^{-\frac{1}{2}}_e 
\| \sigma(\bfv_h) \|_{L^2(K_e)} \| \bfv_h \|_{L^2(e)} \\
& \qquad + C \sum_{e \in \mathcal{E}^I_h} h^{-\frac{1}{2}}_e \| [\bfv_h] 
\|_{L^2(e)} \left( \| \sigma(\bfv_h) \|_{L^2(K_e)} + \| \sigma(\bfv_h) \|_{L^2(K_e')} \right) \\
& \qquad + C \sum_{e \in \mathcal{E}^I_h} h^{-\frac{1}{2}}_e 
\| [\sigma(\bfv_h) \bfn_e] \|_{L^2(e)} \left( \| \bfv_h \|_{L^2(K_e)} 
+ \| \bfv_h \|_{L^2(K_e')} \right).
\end{align*}

Finally, it follows from the discrete Cauchy-Schwarz inequality along with Young's inequality that
\begin{align*}
| \bfv_h |^2_{1,h}  & \leq C h^{-  \frac{1}{2}} \left( \sum_{K \in \mathcal{T}_h} 
\| \sigma(\bfv_h) \|_{L^2(K)} \right)^{\frac{1}{2}} \| \bfv_h \|_{L^2(\pa \Om)} \\
& \qquad + C \gamma_0^{-\frac{1}{2}} \Big(  \sum_{e \in \mathcal{E}_h^I} \frac{\gamma_{0,e}}{h_e}  
\|  [ \bfv_h] \|^2_{L^2(e)} \Big)^{  \frac{1}{2}} \Big( \sum_{K \in \mathcal{T}_h} 
\| \sigma( \bfv_h) \|^2_{L^2(K)} \Big)^{\frac{1}{2}} \\ 
& \qquad + C \gamma_1^{-  \frac{1}{2}} h^{-  1} \Big(  \sum_{e \in \mathcal{E}_h^I} \gamma_{1,e}{h_e}
\|[\sigma(\bfv_h) \bfn_e ] \|^2_{L^2(e)} \Big)^{ \frac{1}{2}} \| \bfv_h \|_{L^2(\Om)} \\
& \leq \delta | \bfv_h |^2_{1,h} + \frac{C(\lambda + 2 \mu)}{\delta \om h} \om 
\| \bfv_h \|^2_{L^2(\pa \Om)} + \frac{\delta}{2} | \bfv_h |^2_{1,h} 
+ \frac{C(\lambda + 2 \mu)}{ \delta \gamma_0}J_0(\bfv_h, \bfv_h) \\
& \qquad \qquad + \delta (1 - \delta) \om^2 \rho \| \bfv_h \|^2_{L^2(\Om)} 
+ \frac{C}{\delta (1 - \delta) \om^2 \rho h^2 \gamma_1} J_1(\bfv_h, \bfv_h).
\end{align*}
Thus \eqref{Stability Lemma 1 Inequality} holds.
\end{proof}

We are now ready to state the following main theorem of this section.
 
\begin{theorem} \label{Discrete Coercivity Theorem}
Let $A_h(\cdot, \cdot)$ be the sesquilinear form defined in \eqref{Sesquilinear Form}.  
Then there hold for all $\bfv_h \in \bfV_h$ 
\begin{align}
| A_h(\bfv_h, \bfv_h) | &\geq C \Big(\xi + \frac{1}{\om h} 
+ \frac{1}{\om^2 h^2 \gamma_1} \Big)^{-1} \Big(| \bfv_h|^2_{1,h} 
+ \om^2 \rho \| \bfv_h \|^2_{L^2(\Om)} \Big), \label{Discrete Coercivity Inequality a} \\
| A_h(\bfv_h, \bfv_h) | &\geq J_0(\bfv_h ,\bfv_h) + J_1(\bfv_h, \bfv_h) 
+ \om \big \langle A \bfv_h, \bfv_h \big \rangle_\Gamma, \label{Discrete Coercivity Inequality b}
\end{align}
where $\xi = 1 + \gamma_0^{-1}$ and $C$ is a positive constant independent 
of $\om$, $h$, $\gamma_0$, and $\gamma_1$.
\end{theorem}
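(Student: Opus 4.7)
The plan is to split $A_h(\bfv_h,\bfv_h)$ into real and imaginary parts and treat the two inequalities separately. Because $\bfi J_0,\bfi J_1$, and the absorbing-boundary term are the only imaginary contributions,
\[
\re A_h(\bfv_h,\bfv_h) = b_h(\bfv_h,\bfv_h) - \om^2\rho\|\bfv_h\|^2_{L^2(\Om)}, \qquad
\im A_h(\bfv_h,\bfv_h) = J_0(\bfv_h,\bfv_h) + J_1(\bfv_h,\bfv_h) + \om\langle A\bfv_h,\bfv_h\rangle_\Gamma.
\]
Each summand of $\im A_h$ is nonnegative (the boundary term by \eqref{SPDInequality}, the others by definition), so \eqref{Discrete Coercivity Inequality b} is immediate from $|A_h(\bfv_h,\bfv_h)|\geq \im A_h(\bfv_h,\bfv_h)$.

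For \eqref{Discrete Coercivity Inequality a}, I would first rewrite the real part. With $\eta=-1$, the two interior-face contributions in $b_h$ are complex conjugates and combine into $-2\re\sum_{e\in\cE^I_h}\langle\{\sigma(\bfv_h)\bfn_e\},[\bfv_h]\rangle_e$, whence
\[
\om^2\rho\|\bfv_h\|^2_{L^2(\Om)} = -\re A_h(\bfv_h,\bfv_h) + |\bfv_h|^2_{1,h} - 2\re\sum_{e\in\cE^I_h}\langle\{\sigma(\bfv_h)\bfn_e\},[\bfv_h]\rangle_e.
\]
The last sum is estimated by Cauchy--Schwarz followed by the trace and inverse inequalities: since $\sigma(\bfv_h)|_K$ is constant (because $\bfv_h|_K\in\bfP_1(K)$), $\|\{\sigma(\bfv_h)\bfn_e\}\|^2_{L^2(e)}\leq C h_e^{-1}\sum_{K\supset e}\|\sigma(\bfv_h)\|^2_{L^2(K)}$. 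Inserting the weights $(h_e/\gamma_{0,e})^{1/2}$ and $(\gamma_{0,e}/h_e)^{1/2}$ and applying Young's inequality give
\[
2\Bigl|\sum_{e\in\cE^I_h}\langle\{\sigma(\bfv_h)\bfn_e\},[\bfv_h]\rangle_e\Bigr|\leq \tfrac12|\bfv_h|^2_{1,h} + \tfrac{C}{\gamma_0}J_0(\bfv_h,\bfv_h).
\]
Combined with the already proved bound $J_0\leq|A_h|$ from \eqref{Discrete Coercivity Inequality b}, this produces
\[
\om^2\rho\|\bfv_h\|^2_{L^2(\Om)} \leq C\xi\,|A_h(\bfv_h,\bfv_h)| + \tfrac32|\bfv_h|^2_{1,h}. \quad (\ast)
\]

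Next I would invoke Lemma \ref{Stability Lemma 1} with a small parameter $\delta$. Using \eqref{SPDInequality} to convert $\om\|\bfv_h\|^2_{L^2(\Gamma)}$ into a multiple of $\im A_h/c_A$, and bounding $J_0,J_1\leq|A_h|$ by \eqref{Discrete Coercivity Inequality b}, the three penalty-type terms on the right of \eqref{Stability Lemma 1 Inequality} collapse into a single multiple of $\Lambda|A_h|$, where $\Lambda:=\xi+1/(\om h)+1/(\om^2 h^2\gamma_1)$. Hence
\[
|\bfv_h|^2_{1,h}\leq \delta\om^2\rho\|\bfv_h\|^2_{L^2(\Om)} + C_\delta\Lambda\,|A_h(\bfv_h,\bfv_h)|. \quad (\ast\ast)
\]
Plugging $(\ast\ast)$ into $(\ast)$ and choosing $\delta$ small enough that $3\delta/2<1$ (for example $\delta=1/3$) absorbs the $\om^2\rho\|\bfv_h\|^2_{L^2(\Om)}$ contribution on the left, giving $\om^2\rho\|\bfv_h\|^2_{L^2(\Om)}\leq C\Lambda|A_h(\bfv_h,\bfv_h)|$. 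Feeding this back into $(\ast\ast)$ yields the same bound for $|\bfv_h|^2_{1,h}$, and summing the two produces \eqref{Discrete Coercivity Inequality a}.

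The main obstacle is the strong indefiniteness of $A_h$: the real part by itself carries $-\om^2\rho\|\bfv_h\|^2_{L^2(\Om)}$ with the wrong sign, and the symmetric interior penalty injects an extra cross term that must be absorbed without destroying the constant in front of $|\bfv_h|^2_{1,h}$. What makes the argument close is Lemma \ref{Stability Lemma 1}, whose proof rests on $\bdiv\sigma(\bfv_h)\equiv\mathbf{0}$ element-wise -- a feature specific to the piecewise-linear choice of $\bfV_h$ that reverses the usual energy inequality and permits the final absorption of $\om^2\rho\|\bfv_h\|^2_{L^2(\Om)}$; the bookkeeping step of verifying that the $1/(\om h)$, $1/\gamma_0$, and $1/(\om^2 h^2\gamma_1)$ factors all collapse into the single factor $\Lambda$ advertised in \eqref{Discrete Coercivity Inequality a} is routine once this structure is in place.
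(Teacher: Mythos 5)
Your proposal is correct and follows essentially the same route as the paper: the identical real/imaginary decomposition, the same trace--inverse bound on the cross term $\sum_{e}\langle\{\sigma(\bfv_h)\bfn_e\},[\bfv_h]\rangle_e$, and the same reliance on Lemma \ref{Stability Lemma 1} (via $\bdiv\sigma(\bfv_h)=\mathbf{0}$ elementwise) together with \eqref{SPDInequality} and \eqref{Discrete Coercivity Inequality b} to absorb $\om^2\rho\|\bfv_h\|^2_{L^2(\Om)}$. The only difference is cosmetic: you substitute one inequality into the other and iterate, whereas the paper adds them and subtracts $\tfrac12\om^2\rho\|\bfv_h\|^2_{L^2(\Om)}$ from both sides; the constants and the factor $\Lambda=\xi+\tfrac{1}{\om h}+\tfrac{1}{\om^2h^2\gamma_1}$ come out the same.
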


\begin{proof}
Let $\bfv_h\in \bfV_h$. Taking the real and imaginary parts of $A_h(\bfv_h, \bfv_h)$ yields
\begin{align}
\re A_h(\bfv_h, \bfv_h) &= | \bfv_h |^2_{1,h} - \om^2 \rho \| \bfv_h \|^2_{L^2(\Om)} 
- 2 \re \sum_{e \in \mathcal{E}_h^I} \big \langle \{ \sigma(\bfv_h) \bfn_e \}, 
[\bfv_h] \big \rangle_e, \label{Real Part of A_h} \\
\im A_h(\bfv_h, \bfv_h) &=  J_0(\bfv_h ,\bfv_h) + J_1(\bfv_h, \bfv_h) 
+ \om \big \langle A \bfv_h, \bfv_h \big \rangle_\Gamma. \label{Imaginary Part of A_h}
\end{align}
Then \eqref{Discrete Coercivity Inequality b} follows directly from \eqref{Imaginary Part of A_h}.

To verify \eqref{Discrete Coercivity Inequality a}, we need to bound the term 
$\sum_{e \in \mathcal{E}_h^I} \big \langle \{ \sigma(\bfv_h) \bfn_e \}, [\bfv_h] \big \rangle_e$,
which involves using the trace and inverse inequality and was already carried out  
previously in the proof of Lemma \ref{Stability Lemma 1}. Thus
\begin{align*}
\re A_h(\bfv_h, \bfv_h) & \leq | \bfv_h |^2_{1,h} - \om^2 \rho \| \bfv_h \|^2_{L^2(\Om)} 
+ C \gamma_0^{-\frac{1}{2}} \Big( \sum_{e \in \mathcal{E}^I_h} \frac{\gamma_0}{h_e} 
\| [ \bfv_h ] \|^2_{L^2(\Om)} \Big)^{\frac{1}{2}} | \bfv_h |_{1,h} \\
& \leq \frac{3}{2} | \bfv_h |^2_{1,h} - \om^2 \rho \| \bfv_h \|^2_{L^2(\Om)} 
+ \frac{C}{\gamma_0} J_0 (\bfv_h, \bfv_h).
\end{align*}
Combining the above inequality with \eqref{Stability Lemma 1 Inequality} 
and using $\delta = \frac{1}{4}$ we get
\begin{align*}
&\frac{1}{2} | \bfv_h |^2_{1,h} + \om^2 \rho \| \bfv_h \|^2_{L^2(\Om)} 
\leq - \re A_h(\bfv_h, \bfv_h) + 2 | \bfv_h|^2_{1,h} + \frac{C}{\gamma_0} J_0(\bfv_h, \bfv_h) \\
& \qquad \qquad \leq - \re A_h(\bfv_h, \bfv_h) + \frac{1}{2} \om^2 \rho \| \bfv_h \|^2_{L^2(\Om)} 
+ \frac{C}{\om h c_A} \om c_A \| \bfv_h \|^2_{L^2(\pa \Om)} \\
& \qquad \qquad \qquad + \frac{C}{\gamma_0} J_0(\bfv_h, \bfv_h) 
+ \frac{C}{\om^2 \rho h^2 \gamma_1} J_1(\bfv_h, \bfv_h).
\end{align*}
Thus, subtracting both sides of the above inequality by 
$\frac{1}{2} \om^2 \rho \| \bfv_h \|^2_{L^2(\Om)}$ and using both 
\eqref{SPDInequality} and \eqref{Imaginary Part of A_h} yield
\begin{align*}
&| \bfv_h |^2_{1,h} + \om^2 \rho \| \bfv_h \|^2_{L^2(\Om)} \\
& \qquad \leq - 2 \re A_h(\bfv_h, \bfv_h) + C \Big( \frac{1}{\gamma_0} 
+ \frac{1}{\om h c_A} + \frac{1}{\om^2 \rho h^2 \gamma_1} \Big) \im A_h(\bfv_h, \bfv_h) \\
&\qquad \leq C \Big( 1+ \frac{1}{\gamma_0}+ \frac{1}{\om h c_A} 
+ \frac{1}{\om^2 \rho h^2 \gamma_1} \Big) | A_h(\bfv_h, \bfv_h)|.
\end{align*}
Hence, \eqref{Discrete Coercivity Inequality a} holds. The proof is complete.
\end{proof}

\begin{remark}
\eqref{Discrete Coercivity Inequality a}--\eqref{Discrete Coercivity Inequality b} 
is called weak coercivity because of the complex magnitude (instead of the real part)
is used in the left-hand sides of these inequalities.
\end{remark}

%

As a consequence of the discrete weak coercivity of Theorem \ref{Discrete Coercivity Theorem} 
we readily obtain the following stability estimates.

\begin{theorem} \label{Stability Theorem}
Suppose that $\bfu_h \in \bfV_h$ solves the IP-DG method given by \eqref{IP-DG Method}.  
Then the following inequalities hold:
\begin{align}
&|\bfu_h |^2_{1,h} + \om^2 \rho \| \bfu_h \|^2_{L^2(\Om)} 
\leq C \Big( \csta^2 \| \bff \|^2_{L^2(\Om)} 
+ \csta \| \bfg \|^2_{L^2(\Gamma)} \Big) \label{Stability Theorem Inequality a} \\
&J_0(\bfu_h, \bfu_h) + J_1(\bfu_h, \bfu_h) + \big \langle A \bfu_h, \bfu_h \big \rangle_\Gamma 
\leq \frac{C}{\om} \Big( \csta \| \bff \|^2_{L^2(\Om)} 
+\| \bfg \|^2_{L^2(\Gamma)} \Big), \label{Stability Theorem Inequality b}
\end{align}
where 
\begin{equation}\label{Csta}
\csta = \frac{\xi}{\om} + \frac{1}{\om^2 h} + \frac{1}{\om^3 h^2 \gamma_1}, 
\qquad\qquad
\xi = 1 + \frac{1}{\gamma_0},
\end{equation}
and $C$ is a positive constant independent of $\om$, $h$, $\gamma_0$, and $\gamma_1$.
\end{theorem}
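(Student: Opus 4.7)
The plan is to test the IP-DG equation \eqref{IP-DG Method} against $\bfv_h=\bfu_h$, combine the right-hand side with Cauchy--Schwarz, and then close the estimates using the two discrete weak coercivity bounds already established in Theorem \ref{Discrete Coercivity Theorem}. Set
\[
X := |\bfu_h|_{1,h}^2 + \om^2\rho\|\bfu_h\|_{L^2(\Om)}^2,
\qquad
Y := J_0(\bfu_h,\bfu_h) + J_1(\bfu_h,\bfu_h) + \om\langle A\bfu_h,\bfu_h\rangle_\Gamma.
\]
Taking $\bfv_h=\bfu_h$ in \eqref{IP-DG Method} yields $A_h(\bfu_h,\bfu_h) = (\bff,\bfu_h)_\Om + \langle\bfg,\bfu_h\rangle_\Gamma$. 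Noticing that $\xi + \tfrac{1}{\om h} + \tfrac{1}{\om^2h^2\gamma_1} = \om\csta$ with $\csta$ as in \eqref{Csta}, the two inequalities in Theorem \ref{Discrete Coercivity Theorem} become
\[
X \le C\,\om\csta\,|A_h(\bfu_h,\bfu_h)|,
\qquad
Y \le |A_h(\bfu_h,\bfu_h)|.
\]

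The key observation is that both $\|\bfu_h\|_{L^2(\Om)}$ and $\|\bfu_h\|_{L^2(\Gamma)}$ can be controlled by $|A_h(\bfu_h,\bfu_h)|^{1/2}$. Indeed, from the definition of $X$ and the first coercivity bound, $\om^2\rho\|\bfu_h\|_{L^2(\Om)}^2 \le X \le C\om\csta|A_h(\bfu_h,\bfu_h)|$, so $\|\bfu_h\|_{L^2(\Om)} \le (C\csta/(\om\rho))^{1/2}|A_h(\bfu_h,\bfu_h)|^{1/2}$. Likewise, using \eqref{SPDInequality} inside $Y$, $\om c_A\|\bfu_h\|_{L^2(\Gamma)}^2 \le Y \le |A_h(\bfu_h,\bfu_h)|$, which gives $\|\bfu_h\|_{L^2(\Gamma)} \le (\om c_A)^{-1/2}|A_h(\bfu_h,\bfu_h)|^{1/2}$. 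Applying Cauchy--Schwarz on both the volume and boundary data terms, we obtain
\[
|A_h(\bfu_h,\bfu_h)|^{1/2} \le \Big(\tfrac{C\csta}{\om\rho}\Big)^{1/2}\|\bff\|_{L^2(\Om)} + \Big(\tfrac{1}{\om c_A}\Big)^{1/2}\|\bfg\|_{L^2(\Gamma)}.
\]

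Squaring this inequality and using $(a+b)^2\le 2a^2+2b^2$ produces $|A_h(\bfu_h,\bfu_h)| \le \tfrac{C}{\om}\big(\csta\|\bff\|_{L^2(\Om)}^2 + \|\bfg\|_{L^2(\Gamma)}^2\big)$, which fed into $Y \le |A_h(\bfu_h,\bfu_h)|$ delivers \eqref{Stability Theorem Inequality b}. Substituting the same squared bound into $X \le C\om\csta|A_h(\bfu_h,\bfu_h)|$ yields $X \le C\big(\csta^2\|\bff\|_{L^2(\Om)}^2 + \csta\|\bfg\|_{L^2(\Gamma)}^2\big)$, which is precisely \eqref{Stability Theorem Inequality a}.

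There is no real obstacle here once Theorem \ref{Discrete Coercivity Theorem} is in hand; the only delicate point is recognizing that the two coercivity bounds play complementary roles — one controls the $L^2(\Om)$ mass via $X$, the other controls the $L^2(\Gamma)$ trace via $Y$ — so that both right-hand side contributions can be absorbed by $|A_h(\bfu_h,\bfu_h)|^{1/2}$ without invoking Young's inequality with a small parameter. All the indefiniteness and non-Hermitian difficulty was handled upstream in Lemma \ref{Stability Lemma 1} and Theorem \ref{Discrete Coercivity Theorem}; the passage from discrete coercivity to data-driven stability is then a short self-bounding argument.
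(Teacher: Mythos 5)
Your proof is correct and follows essentially the same route as the paper: test \eqref{IP-DG Method} with $\bfv_h=\bfu_h$, invoke the two discrete weak coercivity bounds of Theorem \ref{Discrete Coercivity Theorem}, and control the data terms by Cauchy--Schwarz together with \eqref{SPDInequality}. The only difference is cosmetic --- you close the estimate by dividing through by $|A_h(\bfu_h,\bfu_h)|^{1/2}$ (the degenerate case $A_h(\bfu_h,\bfu_h)=0$ being trivial by coercivity), whereas the paper absorbs the $\|\bfu_h\|_{L^2(\Om)}$ and $\|\bfu_h\|_{L^2(\Gamma)}$ factors into the left-hand side via Young's inequality; both yield the stated bounds with the same dependence on $\csta$.
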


\begin{proof}
By \eqref{Discrete Coercivity Inequality a} and \eqref{Discrete Coercivity Inequality b} we get
\begin{align*}
&| \bfu_h |^2_{1,h} + \om^2 \rho \| \bfu_h \|^2_{L^2(\Om)} + \om \Big( 1 
+ \frac{1}{\gamma_0}+ \frac{1}{\om h} 
+ \frac{1}{\om^2 \rho h^2 \gamma_1} \Big) \big \langle A \bfu_h ,\bfu_h \big \rangle_{\Gamma} \\
& \qquad \leq C  \Big( 1 + \frac{1}{\gamma_0} + \frac{1}{\om h} 
+ \frac{1}{\om^2 \rho h^2 \gamma_1} \Big) | A_h(\bfu_h, \bfu_h) | \\
& \qquad \leq C  \Big( 1 + \frac{1}{\gamma_0}+ \frac{1}{\om h} 
+ \frac{1}{\om^2 \rho h^2 \gamma_1} \Big) \Big( \| \bff \|_{L^2(\Om)} \| \bfu_h \|_{L^2(\Om)} 
+ \| \bfg \|_{L^2(\Gamma)} \| \bfu_h \|_{L^2(\Gamma)} \Big) \\
& \qquad \leq \frac{C}{\om^2 \rho} \Big(1 + \frac{1}{\gamma_0}+ \frac{1}{\om h} 
+ \frac{1}{\om^2 \rho h^2 \gamma_1} \Big)^2 \| \bff \|^2_{L^2(\Om)} 
+ \frac{\om^2 \rho}{2} \| \bfu_h \|^2_{L^2(\Om)} \\
& \qquad \qquad \qquad + \frac{C}{\om c_A} \Big( 1 + \frac{1}{\gamma_0}
+ \frac{1}{\om h} + \frac{1}{\om^2 \rho h^2 \gamma_1} \Big) \| \bfg \|^2_{L^2(\Gamma)} \\
& \qquad \qquad \qquad + \om c_A \Big( 1 + \frac{1}{\gamma_0}+ \frac{1}{\om h} 
+ \frac{1}{\om^2 \rho h^2 \gamma_1} \Big) \| \bfu_h \|^2_{L^2(\Gamma)}.
\end{align*}
Substituting \eqref{SPDInequality} into the above inequality infers 
\eqref{Stability Theorem Inequality a}.

Now, combining \eqref{Discrete Coercivity Inequality b} with 
\eqref{Stability Theorem Inequality a} yields
\begin{align*}
& J_0(\bfu_h, \bfu_h) + J_1(\bfu_h, \bfu_h)+\om \big \langle A \bfu_h, \bfu_h \big \rangle_{\Gamma} \\
& \qquad \leq | A_h(\bfu_h, \bfu_h) | \\
& \qquad \leq \| \bff \|_{L^2(\Om)} \| \bfu_h \|_{L^2(\Om)} 
+ \| \bfg \|_{L^2(\Gamma)} \| \bfu_h \|_{L^2(\Gamma)} \\
& \qquad \leq \frac{C}{\om \rho^{\frac{1}{2}}}\| \bff \|_{L^2(\Om)} 
\bigg( \frac{1}{ \rho} \csta^2 \| \bff \|^2_{L^2(\Om)} 
+ \frac{1}{ c_A}  \csta \| \bfg \|^2_{L^2(\Gamma)} \bigg)^{\frac{1}{2}} \\ 
& \qquad \qquad + \frac{C}{\om c_A} \| \bfg \|^2_{L^2(\Gamma)} 
+ \frac{\om c_A}{2} \| \bfu_h \|^2_{L^2(\Gamma)} \\
& \qquad \leq \frac{C}{\om \rho} \csta \| \bff \|^2_{L^2(\Om)} 
+ \frac{C}{\om c_A} \| \bfg \|^2_{L^2(\Gamma)} + \frac{\om c_A}{2} \| \bfu_h \|^2_{L^2(\Gamma)}.
\end{align*}
Using \eqref{SPDInequality} in the above inequality infers the 
desired estimate \eqref{Stability Theorem Inequality b}.
\end{proof}

\begin{remark}\label{rem-Csta}
When $h$ is restricted to the pre-asymptotic mesh regime characterized by 
the constraint $\om^\beta h =O(1)$ for $1\leq \beta<1+\alpha$, then we have 
\begin{align}
\csta \leq  C \Big( \frac{\xi}{\om} + \om^{\beta - 2} + \frac{\om^{2 \beta - 3}}{\gamma_1} \Big).
\end{align}
Thus, the constant in the above stability estimate can be replaced by one that 
is independent of $h$ (but still depends on $\Ome$) in the pre-asymptotic mesh regime.
\end{remark}

We conclude this section by establishing the following existence and uniqueness theorem.

\begin{theorem}\label{existence}
For any $\om,h,\gamma_0, \gamma_1 > 0$, $\bff \in \bL^2(\Om)$, and
$\bfg \in \bL^2(\pa \Om)$ there exists a unique solution $\bfu_h$ of \eqref{IP-DG Method}.
\end{theorem}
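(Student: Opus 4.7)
The plan is to exploit the fact that \eqref{IP-DG Method} is a square linear system on the finite-dimensional space $\bfV_h$, so existence and uniqueness are equivalent; it therefore suffices to establish uniqueness, i.e., to show that the only solution corresponding to $\bff = \mathbf{0}$ and $\bfg = \mathbf{0}$ is $\bfu_h = \mathbf{0}$.

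To do so, I would apply the discrete stability estimate \eqref{Stability Theorem Inequality a} from Theorem \ref{Stability Theorem} directly to the homogeneous problem. With $\bff = \mathbf{0}$ and $\bfg = \mathbf{0}$, the right-hand side of \eqref{Stability Theorem Inequality a} vanishes, yielding
\begin{align*}
|\bfu_h|_{1,h}^2 + \om^2 \rho \|\bfu_h\|_{L^2(\Om)}^2 \leq 0.
\end{align*}
Since $\om, \rho > 0$, this forces $\|\bfu_h\|_{L^2(\Om)} = 0$, hence $\bfu_h \equiv \mathbf{0}$. Because $\bfV_h$ and its image under the linear operator induced by $A_h$ have the same (finite) dimension, the trivial kernel implies that the map is a bijection, so a unique solution exists for every right-hand side $\bff \in \bL^2(\Om)$ and $\bfg \in \bL^2(\p\Om)$.

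There is essentially no obstacle here: the work has already been done in Theorem \ref{Stability Theorem}, whose proof only used the sesquilinear identity \eqref{Sesquilinear Form}, the discrete weak coercivity of Theorem \ref{Discrete Coercivity Theorem}, and the Cauchy--Schwarz inequality applied to the right-hand side of \eqref{IP-DG Method}; none of those steps relied on the a priori existence of $\bfu_h$, only on $\bfu_h$ being an element of $\bfV_h$ satisfying \eqref{IP-DG Method}. The only thing to verify is that the constants appearing in \eqref{Csta} are finite for every $\om, h, \gamma_0, \gamma_1 > 0$, which is immediate from their definitions. Thus the proof reduces to a one-line application of the stability bound combined with the standard finite-dimensional equivalence of injectivity and surjectivity.
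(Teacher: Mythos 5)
Your proof is correct and follows essentially the same route as the paper: both reduce \eqref{IP-DG Method} to a square finite-dimensional linear system and invoke the stability estimate \eqref{Stability Theorem Inequality a} to show the homogeneous system has only the trivial solution, whence nonsingularity gives existence and uniqueness. Your additional observation that the proof of Theorem \ref{Stability Theorem} does not presuppose existence is a worthwhile check, but it does not change the argument.
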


\begin{proof}
Due to the linearity of the PDE, it is easy to check that \eqref{IP-DG Method} is 
equivalent to a linear (algebraic) system. The stability estimate \eqref{Stability Theorem Inequality a}
immediately implies that the linear system only has the trivial solution when
$\mathbf{f}=\mathbf{0}$ and $\mathbf{g}=\mathbf{0}$, which then infers that
the coefficient matrix of the linear system is nonsingular. Thus, the 
system must have a unique solution and consequently \eqref{IP-DG Method}
has a unique solution.
\end{proof}

\section{Error estimates}\label{sec-4}
The goal of this section is to provide error estimates that hold in all mesh regimes,
in particular, in the pre-asymptotic mesh regime.  In order to accomplish this we shall 
adapt a non-standard argument of \cite{Feng_Wu09,Feng_Wu14} by defining an elliptic 
projection operator onto $\bfV_h$ and establishing estimates for the error between the 
solution $\bfu \in \bfE \cap \bfH^d(\Om)$ of \eqref{Eq:ElasticPDE}--\eqref{Eq:ElasticBC} 
and its elliptic projection, and utilizing the unconditional stability estimate to control 
the error between the projection and the IP-DG solution. In this section we assume 
$\bfu\in \bfH^2(\Ome)$ satisfying the estimate given in Theorem \ref{thm:PDE_estimate}.

First, we show that the sesquilinear form $a_h(\cdot, \cdot)$ is continuous 
and satisfies a weak coercivity condition.
\begin{theorem} \label{Continuity and Coercivity}
For any $\bfv, \bfw \in \bfE$ there exists a positive constant $C$ independent 
of $\om$, $h$, $\gamma_0$, $\gamma_1$ such that
\begin{align}
|a_h(\bfv, \bfw)| &\leq C ||| \bfv |||_{1,h} ||| \bfw |||_{1,h}. \label{Continuity a}
\end{align}
Also for any $0 < \delta < 1$ and $\bfv_h \in \bfV_h$ there holds
\begin{align}
\re a_h(\bfv_h, \bfv_h) + \left(1 - \delta + \frac{C_\delta}{\gamma_0} \right) 
\im a_h(\bfv_h,\bfv_h) \geq (1 - \delta) \| \bfv_h \|^2_{1,h} \label{Weak Coercivity}. 
\end{align}
\end{theorem}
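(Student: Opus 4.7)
The plan is to prove the two inequalities separately, treating the continuity bound \eqref{Continuity a} by a termwise application of weighted Cauchy--Schwarz that matches the structure of $|||\cdot|||_{1,h}$, and the weak coercivity \eqref{Weak Coercivity} by splitting $a_h(\bfv_h,\bfv_h)$ into its real and imaginary parts and absorbing the only indefinite term into the penalty $J_0$.

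For \eqref{Continuity a}, I would write $a_h = b_h + \bfi(J_0+J_1)$ and bound each of the four pieces of $b_h$ and each of $J_0,J_1$ in turn. The volume terms $\lambda(\ddiv\bfv,\ddiv\bfw)_K$ and $2\mu(\varepsilon(\bfv),\varepsilon(\bfw))_K$ directly give $|\bfv|_{1,h}|\bfw|_{1,h}$ after Cauchy--Schwarz and summation over $K$. For the two interior-edge terms $\langle\{\sigma(\bfv)\bfn_e\},[\bfw]\rangle_e$ and $\langle[\bfv],\{\sigma(\bfw)\bfn_e\}\rangle_e$ (recall $\eta=-1$), the trick is to split the weight as
\begin{align*}
\bigl|\langle\{\sigma(\bfv)\bfn_e\},[\bfw]\rangle_e\bigr|
 \leq \sqrt{h_e/\gamma_{0,e}}\,\|\{\sigma(\bfv)\bfn_e\}\|_{L^2(e)} \cdot \sqrt{\gamma_{0,e}/h_e}\,\|[\bfw]\|_{L^2(e)},
\end{align*}
then apply the discrete Cauchy--Schwarz in the sum over $e$; the first factor is dominated by $|||\bfv|||_{1,h}$ and the second by $\|\bfw\|_{1,h}$. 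The penalty terms $J_0$ and $J_1$ are handled by direct Cauchy--Schwarz with weights $\gamma_{0,e}/h_e$ and $\gamma_{1,e}h_e$, which are exactly the weights built into $\|\cdot\|_{1,h}$. Summing the estimates gives \eqref{Continuity a}.

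For \eqref{Weak Coercivity}, observe that with $\eta=-1$ the two cross edge terms are complex conjugates of each other when $\bfu=\bfv=\bfv_h$, so
\begin{align*}
\re a_h(\bfv_h,\bfv_h) &= |\bfv_h|_{1,h}^2 - 2\re\sum_{e\in\cE_h^I}\langle\{\sigma(\bfv_h)\bfn_e\},[\bfv_h]\rangle_e, \\
\im a_h(\bfv_h,\bfv_h) &= J_0(\bfv_h,\bfv_h)+J_1(\bfv_h,\bfv_h).
\end{align*}
The only indefinite quantity is the edge cross-term. Here I would use the inverse/trace inequality on the piecewise-polynomial space $\bfV_h$ (the same estimate already exploited in Lemma~\ref{Stability Lemma 1}) to obtain
\begin{align*}
\sum_{e\in\cE_h^I}\tfrac{h_e}{\gamma_{0,e}}\|\{\sigma(\bfv_h)\bfn_e\}\|_{L^2(e)}^2 \leq \tfrac{C}{\gamma_0}\,|\bfv_h|_{1,h}^2,
\end{align*}
and then weighted Cauchy--Schwarz plus Young's inequality with parameter $\delta$ to get
\begin{align*}
2\Bigl|\sum_{e\in\cE_h^I}\langle\{\sigma(\bfv_h)\bfn_e\},[\bfv_h]\rangle_e\Bigr|
 \leq \delta\,|\bfv_h|_{1,h}^2 + \tfrac{C_\delta}{\gamma_0}\,J_0(\bfv_h,\bfv_h).
\end{align*}
Substituting into the expression for $\re a_h$ and then forming the combination $\re a_h(\bfv_h,\bfv_h)+(1-\delta+C_\delta/\gamma_0)\,\im a_h(\bfv_h,\bfv_h)$ causes the negative $-C_\delta/\gamma_0\,J_0$ contribution to be exactly cancelled by the $C_\delta/\gamma_0$ piece multiplying $J_0$ in $\im a_h$, while the remaining $(1-\delta)$ weight turns $|\bfv_h|_{1,h}^2+J_0+J_1$ into $(1-\delta)\|\bfv_h\|_{1,h}^2$.

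The main obstacle is not algebraic but conceptual: we must absorb the edge term $2\re\sum_e\langle\{\sigma(\bfv_h)\bfn_e\},[\bfv_h]\rangle_e$ without losing coercivity, and the factor $C_\delta/\gamma_0$ inserted into $\im a_h$ is precisely what makes the $J_0$ contributions cancel. This is also the step that dictates the unusual choice of the complex coefficient in front of $J_0$ and justifies why penalization of $\{\sigma(\bfv_h)\bfn_e\}$ via $J_0$ (rather than of $\{\bfv_h\}$) is the right design choice here.
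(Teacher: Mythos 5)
Your proposal is correct and follows essentially the same route as the paper: the continuity bound via termwise weighted Cauchy--Schwarz (which the paper omits as routine), and the weak coercivity by splitting into real and imaginary parts, absorbing the edge cross-term with the trace/inverse inequality and Young's inequality into $\delta\,|\bfv_h|_{1,h}^2+\tfrac{C_\delta}{\gamma_0}J_0$, and letting the coefficient $1-\delta+C_\delta/\gamma_0$ on $\im a_h$ cancel the negative $J_0$ contribution. (Your closing remark misattributes to $J_0$ the penalization of $\{\sigma(\bfv_h)\bfn_e\}$ --- $J_0$ penalizes $[\bfv_h]$ and $J_1$ penalizes $[\sigma(\bfv_h)\bfn_e]$ --- but this does not affect the argument.)
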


\begin{proof}
\eqref{Continuity a} is easy to prove, we omit it and leave it to the interested reader.  
To show \eqref{Weak Coercivity}, we note that
\begin{align*}
\re a_h(\bfv_h, \bfv_h) &\geq |\bfv_h|^2_{1,h} + 2 \sum_{e \in \mathcal{E}^I_h}\Big| 
\big \langle [\bfv_h], \{\sigma(\bfv_h) \bfn_e \} \big \rangle_e \Big| \\
&\geq |\bfv_h|^2_{1,h} - 2 \sum_{e \in \mathcal{E}^I_h}  \big \| [\bfv_h] 
\big \|_{L^2(e)} \big \| \{\sigma(\bfv_h) \bfn_e \} \big \|_{L^2(e)} \\
&\geq |\bfv_h|^2_{1,h} - \sum_{e \in \mathcal{E}^I_h} \frac{C}{\gamma_0} 
\frac{\gamma_0}{h_e^{\frac{1}{2}}} \big \| [\bfv_h] \big \|_{L^2(e)} 
\Big( \big \|\sigma(\bfv_h) \big \|_{L^2(K_e)} + \big \|\sigma(\bfv_h) \big \|_{L^2(K_e')} \Big) \\
&\geq (1-\delta)|\bfv_h|^2_{1,h} -\frac{C_\delta}{\gamma_0}J_0(\bfv_h,\bfv_h).
\end{align*}
Also,
\begin{align*}
\im a_h(\bfv_h,\bfv_h) = J_0(\bfv_h, \bfv_h) + J_1(\bfv_h,\bfv_h).
\end{align*}
\eqref{Weak Coercivity} follows from combining the above two results.
\end{proof}

\subsection{Elliptic projection and its error estimates} \label{Projection Subsection}
For any $\bfw \in E$ we define its elliptic projection $\tilde{\bfw}_h \in \bfV_h$ as 
the solution to the following problem:
\begin{align}
a_h(\tilde{\bfw}_h,\bfv_h) + \bfi \om \big \langle A \tilde{\bfw}_h,\bfv_h \big \rangle_\Gamma 
= a_h(\bfw,\bfv_h) + \bfi \om \big \langle A \bfw, \bfv_h \big \rangle_\Gamma 
\qquad \forall \bfv_h \in \bfV_h. \label{Elliptic Projection}
\end{align}

\begin{remark}
By Theorem \ref{Continuity and Coercivity}, the sesquilinear form 
$a_h(\cdot,\cdot) + \i \om \langle A \cdot, \cdot \rangle_\Gamma$ is both continuous 
and coercive when $\gamma_0$ is taken to be sufficiently large.  Thus, the Lax-Milgram 
Theorem ensures that $\tilde{\bfw}_h$ is well-defined for such a choice of $\gamma_0$.
\end{remark}

By the consistency of $a_h(\cdot, \cdot)$, it is trivial to check that
the following Galerkin orthogonality property holds.
\begin{lemma} \label{Galerkin Orthogonality}
Suppose that $\bfw \in \bfE$ and $\tilde{\bfw}_h \in \bfV_h$ is its elliptic projection, then
\begin{align}
a_h(\bfw - \tilde{\bfw}_h,\bfv_h) + \bfi \om \big \langle A(\bfw - \tilde{\bfw}_h), 
\bfv_h \big \rangle = 0 \qquad \forall \bfv_h \in \bfV_h. \label{Galerkin Orthogonality Equation}
\end{align}
\end{lemma}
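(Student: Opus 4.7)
The plan is to derive the identity directly from the defining equation \eqref{Elliptic Projection} of the elliptic projection, making essential use of the sesquilinearity of both $a_h(\cdot,\cdot)$ and $\langle A\cdot,\cdot\rangle_\Gamma$ in their first arguments. For any fixed $\bfv_h \in \bfV_h$, the definition of $\tilde{\bfw}_h$ gives
$$
a_h(\tilde{\bfw}_h,\bfv_h)+\bfi\om\langle A\tilde{\bfw}_h,\bfv_h\rangle_\Gamma
=a_h(\bfw,\bfv_h)+\bfi\om\langle A\bfw,\bfv_h\rangle_\Gamma,
$$
and transferring both terms on the left to the right and combining them via linearity in the first slot produces exactly \eqref{Galerkin Orthogonality Equation}. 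Since $\bfv_h$ was arbitrary, this finishes the argument.

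In other words, the lemma is essentially a tautology once the elliptic projection is defined, and no real estimation or inequality is needed. The only point worth verifying is that each of the four expressions in the displayed identity is individually well defined: this is guaranteed by $\bfw \in \bfE$ (so $a_h(\bfw,\cdot)$ makes sense through the broken $\bfH^2$ regularity encoded in \eqref{energy}) and by the fact that the boundary trace of $\bfw$ lies in $\bfL^2(\Gamma)$, which again follows from the element-wise $\bfH^2$ regularity and the standard trace theorem applied on boundary elements.

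Because there is no genuine inequality to prove, the only potential obstacle is a cosmetic one, namely ensuring that the rearrangement is legitimate; this is immediate from the fact that $a_h$ and $\langle A\cdot,\cdot\rangle_\Gamma$ are sesquilinear, so the linear combination $a_h(\bfw,\bfv_h)-a_h(\tilde{\bfw}_h,\bfv_h) = a_h(\bfw-\tilde{\bfw}_h,\bfv_h)$ is valid. The remark in the paper that this follows ``by the consistency of $a_h(\cdot,\cdot)$'' is therefore a minor overstatement: consistency of $a_h$ with the original PDE is not actually used, only the linearity of $a_h$ in its first argument and the very definition of the elliptic projection. Consequently, I would present the proof as a one-line deduction from \eqref{Elliptic Projection}.
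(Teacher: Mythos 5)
Your proof is correct and matches the paper's (unwritten) argument: the paper simply declares the lemma ``trivial to check,'' and your one-line subtraction of the two sides of \eqref{Elliptic Projection} using sesquilinearity in the first argument is exactly what is intended. Your side remark is also fair --- consistency of $a_h(\cdot,\cdot)$ plays no role in this particular lemma; only the definition of the elliptic projection and linearity in the first slot are used.
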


Now we are ready to estimate the error between the solution $\bfu$ of 
\eqref{Eq:ElasticPDE}--\eqref{Eq:ElasticBC} and its elliptic projection $\bftu_h$.

\begin{theorem} \label{Elliptic Projection Error}
Let $\bfu \in \bH^2(\Om)$ solve \eqref{Eq:ElasticPDE}--\eqref{Eq:ElasticBC} and 
let $\bftu_h \in \bfV_h$ be its elliptic projection defined in \eqref{Elliptic Projection}.  
Then there hold the following estimates:
\begin{align} \label{Elliptic Projection Error Inequality a}
&||| \bfu - \bftu_h |||_{1,h} + \om^{\frac{1}{2}} \xi \| \bfu - \bftu_h \|_{L^2(\Gamma)} \\
& \qquad \leq C \xi^2 h \big (\xi + \gamma_1 + \om h \big)^{\frac{1}{2}} \left(\om^\alpha 
+ \frac{1}{\om^2} \right) \Big( \| \bff \|_{L^2(\Om)} + \| \bfg \|_{L^2(\Gamma)} \Big). \notag\\
&\| \bfu - \bftu_h \|_{L^2(\Om)}
\leq C \xi^2 h^2 \big (\xi + \gamma_1 + \om h \big) 
\left( \om^\alpha + \frac{1}{\om^2} \right) \Big( \| \bff \|_{L^2(\Om)} 
+ \| \bfg \|_{L^2(\Gamma)} \Big), \label{Elliptic Projection Error Inequality b}
\end{align}
where $\xi = 1 + \gamma_0^{-1}$ and $C$ is a positive constant independent of 
$\om$, $h$, $\gamma_0$, and $\gamma_1$.
\end{theorem}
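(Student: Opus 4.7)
I would use the standard elliptic-projection error framework, writing $\bfu - \bftu_h = \eta + \phi_h$ where $\eta := \bfu - \bfu_I$ for $\bfu_I \in \bfV_h$ the piecewise linear nodal interpolant of $\bfu$, and $\phi_h := \bfu_I - \bftu_h \in \bfV_h$. Since $\bfu \in \bH^2(\Om) \subset C^0$ and the mesh is conforming, $\bfu_I$ is globally continuous so $[\eta]_e = \mathbf{0}$ on every interior edge. Combining standard Bramble--Hilbert interpolation estimates with the trace inequality applied to $\sigma(\eta)$ then yields
\begin{align*}
|||\eta|||_{1,h} \leq C h (\xi + \gamma_1)^{1/2} \|\bfu\|_{H^2(\Om)},
\qquad
\|\eta\|_{L^2(\Gamma)} \leq C h^{3/2} \|\bfu\|_{H^2(\Om)}.
\end{align*}

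Next, for $\phi_h$ I would set $\bfv_h = \phi_h$ in the Galerkin-orthogonality identity of Lemma \ref{Galerkin Orthogonality}, take the appropriate linear combination of real and imaginary parts guided by the weak coercivity \eqref{Weak Coercivity}, and bound the right-hand side using continuity \eqref{Continuity a}, the SPD estimate \eqref{SPDInequality}, and the norm equivalence $|||\phi_h|||_{1,h} \leq C \xi^{1/2} \|\phi_h\|_{1,h}$ from \eqref{Norm Equivalence}, followed by Cauchy--Schwarz and Young's inequality. This should yield
\begin{align*}
\|\phi_h\|^2_{1,h} + \om \|\phi_h\|^2_{L^2(\Gamma)}
\leq C \xi \bigl(|||\eta|||^2_{1,h} + \om \|\eta\|^2_{L^2(\Gamma)}\bigr)
\leq C \xi^2 h^2 \bigl(\xi + \gamma_1 + \om h\bigr) \|\bfu\|^2_{H^2(\Om)}.
\end{align*}
Combining this with the interpolation bounds for $\eta$, the triangle inequality, one more application of \eqref{Norm Equivalence}, and the PDE regularity $\|\bfu\|_{H^2(\Om)} \leq C(\om^\alpha + \om^{-2})(\|\bff\|_{L^2(\Om)} + \|\bfg\|_{L^2(\Gamma)})$ from Theorem \ref{thm:PDE_estimate} delivers \eqref{Elliptic Projection Error Inequality a}.

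For \eqref{Elliptic Projection Error Inequality b}, I would use an Aubin--Nitsche duality argument. Let $\bfz$ solve the adjoint static problem $-\bdiv \sigma(\bfz) = \bfu - \bftu_h$ in $\Om$ with $\sigma(\bfz)\bfn - \i\om A\bfz = \mathbf{0}$ on $\Gamma$, which is adjoint to the elliptic-projection sesquilinear form $a_h(\cdot,\cdot) + \i\om \langle A\cdot,\cdot\rangle_\Gamma$. This dual problem enjoys the $H^2$-regularity estimate $\|\bfz\|_{H^2(\Om)} \leq C(\om^\alpha + \om^{-2})\|\bfu - \bftu_h\|_{L^2(\Om)}$ by an argument parallel to the proof of Theorem \ref{thm:PDE_estimate}. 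Testing the adjoint equation against $\bfu - \bftu_h$, inserting the interpolant $\bfz_I$ via Galerkin orthogonality, and applying continuity of $a_h$ together with the $H^1$-type estimate just established (to both $\bfu - \bftu_h$ and $\bfz - \bfz_I$) produces the extra factor $h (\xi + \gamma_1 + \om h)^{1/2}$ needed for \eqref{Elliptic Projection Error Inequality b}.

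The main technical difficulty I anticipate is the treatment of $\phi_h$: selecting the real/imaginary combination in the Galerkin-orthogonality identity so that the indefinite boundary coupling $\i\om \langle A\cdot,\cdot\rangle_\Gamma$ and the penalty contribution from $J_1$ on $[\sigma(\cdot)\bfn_e]$ are absorbed into a coercive estimate without producing $\xi$ or $\gamma_1$ powers in excess of those stated. A secondary concern is verifying the $\om$-explicit regularity of the adjoint static problem in the duality step; this mirrors the analysis of Section \ref{sec-1b} and likewise relies on Korn's inequality together with the boundary-Korn conjecture.
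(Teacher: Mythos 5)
Your treatment of the $H^1$-type estimate \eqref{Elliptic Projection Error Inequality a} follows essentially the same route as the paper: decompose $\bfu - \bftu_h$ through the $P_1$-conforming interpolant, bound the interpolation error by Bramble--Hilbert plus trace/inverse inequalities, and control the discrete remainder by testing the Galerkin-orthogonality identity with itself and invoking the weak coercivity \eqref{Weak Coercivity}, the continuity \eqref{Continuity a}, and the norm equivalence \eqref{Norm Equivalence}. That part is sound and matches the paper's argument in both structure and the resulting powers of $\xi$, $\gamma_1$, and $h$.

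The duality step for \eqref{Elliptic Projection Error Inequality b} contains a genuine error. You correctly identify the dual problem as the \emph{static} problem $-\bdiv\,\sigma(\bfw) = \bfu - \bftu_h$ with the impedance-type boundary condition, i.e.\ the adjoint of $a_h(\cdot,\cdot) + \bfi\om\langle A\cdot,\cdot\rangle_\Gamma$, which contains no $-\om^2\rho(\cdot,\cdot)_\Om$ term. But you then assign it the Helmholtz-type regularity constant $C(\om^\alpha + \om^{-2})$ ``by an argument parallel to Theorem \ref{thm:PDE_estimate}.'' That constant belongs to the indefinite problem; the static dual problem is coercive and satisfies $\|\bfw\|_{H^2(\Om)} \le C\,\|\bfu - \bftu_h\|_{L^2(\Om)}$ with $C$ independent of $\om$, which is exactly what the paper uses. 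If you carry your stated regularity constant through the Aubin--Nitsche computation, the factor $(\om^\alpha + \om^{-2})$ appears once from the dual regularity and once more from substituting \eqref{Elliptic Projection Error Inequality a}, producing $(\om^\alpha + \om^{-2})^2$ in the final bound --- strictly weaker than the claimed \eqref{Elliptic Projection Error Inequality b}. The fix is simply to use the $\om$-uniform regularity of the static dual problem; with that correction your duality argument delivers the stated estimate.
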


\begin{proof}
Let $\bfhu_h \in \bf\bfV_h$ denote the $P_1$-conforming finite element interpolant 
of $\bfu$ on $\mathcal{T}_h$. Then the following estimates are well-known 
(c.f. \cite{Brenner_Scott,Ciarlet}):
\begin{align} \label{Interpolation Error 1}
\| \bfu - \bfhu_h \|_{L^2(\Om)} &\leq Ch^2 | \bfu |_{H^2(\Om)},\\ 
\|\bnabla ( \bfu - \bfhu_h) \|_{L^2(\Om)} &\leq Ch | \bfu |_{H^2(\Om)}.\label{Interpolation Error 1b}
\end{align}
By the trace and inverse inequalities we get 
\begin{align}
&|||\bfu - \bfhu_h |||_{1,h} \leq C(\xi + \gamma_1)^{\frac{1}{2}}h|\bfu|_{H^2(\Om)},
\label{Interpolation Error 2} \\
&\|\bfu - \bfhu_h \|_{L^2(\Gamma)} \leq Ch^{\frac{3}{2}} |\bfu |_{H^2(\Om)}. 
\label{Interpolation Error 3}
\end{align}
Set $\bfpsi_h := \bftu_h - \bfhu_h$.  By Galerkin orthogonality along with the fact that 
$\bfpsi_h + \bfu - \bftu_h = \bfu - \bfhu_h$ we obtain
\begin{align*}
a_h(\bfpsi_h, \bfpsi_h) + \bfi \om \big\langle A \bfpsi_h, \bfpsi_h \big\rangle_{\Gamma} 
= a_h(\bfu - \bfhu_h, \bfpsi_h) +\bfi\om \big \langle A(\bfu -\bfhu_h),\bfpsi_h\big\rangle_{\Gamma}.
\end{align*}

Next, it follows from \eqref{Norm Equivalence} and \eqref{Weak Coercivity} 
with $\delta = \frac{1}{2}$ (noting that $C_{\frac{1}{2}} > \frac{1}{2}$) that
\begin{align*}
&\frac{1}{2} ||| \bfpsi_h |||^2_{1,h}  \leq C\xi \| \bfpsi_h \|^2_{1,h} \\
& \qquad \leq C \xi \re a_h(\bfpsi_h, \bfpsi_h) + C \xi \left(\frac{1}{2} 
+ \frac{C_{\frac{1}{2}}}{\gamma_0} \right) \im a_h(\bfpsi_h, \bfpsi_h) \\
& \qquad = C\xi \re \Big( a_h(\bfpsi_h, \bfpsi_h) 
+ \bfi \om \big \langle A\bfpsi_h, \bfpsi_h \big \rangle_\Gamma \Big) \\
& \qquad \qquad + C\xi \left(\frac{1}{2} 
+ \frac{C_{\frac{1}{2}}}{\gamma_0} \right) \im \Big( a_h(\bfpsi_h, \bfpsi_h) 
+ \bfi \om \big \langle A\bfpsi_h, \bfpsi_h \big \rangle_\Gamma \Big) \\
& \qquad \qquad - C\xi \om \left(\frac{1}{2} 
+ \frac{C_{\frac{1}{2}}}{\gamma_0} \right) \big \langle A\bfpsi_h, \bfpsi_h \big \rangle_\Gamma \\
& \qquad = C\xi \re \Big( a_h(\bfu - \bfhu_h, \bfpsi_h) 
+ \bfi \om \big \langle A(\bfu - \bfhu_h), \bfpsi_h \big \rangle_\Gamma \Big)  \\
& \qquad \qquad + C\xi \left(\frac{1}{2} + \frac{C_{\frac{1}{2}}}{\gamma_0} \right) 
\im \Big( a_h(\bfu - \bfhu_h, \bfpsi_h) + \bfi \om \big \langle A(\bfu - \bfhu_h), 
\bfpsi_h \big \rangle_\Gamma \Big) \\
& \qquad \qquad - C\xi \om \left(\frac{1}{2} 
+ \frac{C_{\frac{1}{2}}}{\gamma_0} \right) \big \langle A\bfpsi_h, \bfpsi_h \big \rangle_\Gamma \\
& \qquad \leq C\xi  ||| \bfpsi_h |||_{1,h} ||| \bfu - \bfhu |||_{1,h} 
+ C \om \xi \| \bfpsi_h \|_{L^2(\Gamma)} \| \bfu - \bfhu \|_{L^2(\Gamma)} 
- C \om \xi^2 \| \bfpsi_h \|^2_{L^2(\Gamma)} \\
& \qquad \qquad + C \xi^2 \Big( ||| \bfpsi_h |||_{1,h} ||| \bfu - \bfhu |||_{1,h} 
+ \om \| \bfpsi_h \|_{L^2(\Gamma)} \| \bfu - \bfhu \|_{L^2(\Gamma)} \Big) \\
&\qquad \leq C \xi^2 ||| \bfpsi_h |||_{1,h} ||| \bfu - \bfhu |||_{1,h} 
+ 2C \om \xi^2 \| \bfu - \bfhu_h \|^2_{L^2(\Gamma)} 
- \frac{C}{4} \om \xi^2 \| \bfpsi_h \|_{L^2(\Gamma)} \\
&\qquad \leq C \xi^4 |||\bfu - \bfhu_h |||^2_{1,h} 
+ \frac{1}{4} ||| \bfpsi_h |||^2_{1,h} - \frac{C}{4}\om \xi^2 \| \bfpsi_h \|^2_{L^2(\Gamma)} 
+2C \om \xi^2 \| \bfu - \bfhu_h \|^2_{L^2(\Gamma)}.
\end{align*}
Substituting \eqref{Interpolation Error 2} and \eqref{Interpolation Error 3} into the 
above estimate gives
\begin{align*}
&||| \bfpsi_h |||^2_{1,h} + \om \xi^2 \| \bfpsi_h \|^2_{L^2(\Gamma)} 
\leq C \Big( \xi^4 |||\bfu -\bfhu_h|||^2_{1,h} +\om \xi^2 \| \bfu -\bfhu_h \|^2_{L^2(\Gamma)}\Big) \\
& \qquad \qquad \leq C \Big (\xi^4 (\xi + \gamma_1) h^2 | \bfu |^2_{H^2(\Om)} 
+ \om \xi^2h^3 |\bfu|^2_{H^2(\Om)} \Big) \\
& \qquad \qquad = C \xi^4 h^2 \big( \xi + \gamma_1 + \om h\big) | \bfu |^2_{H^2(\Om)} \\
& \qquad \qquad \leq C \xi^4 h^2 \big(\xi + \gamma_1 + \om h \big) \left( \om^\alpha 
+ \frac{1}{\om^2} \right)^2 \Big( \| \bff \|^2_{L^2(\Om)} + \| \bfg \|^2_{L^2(\Gamma)} \Big).
\end{align*}
Thus
\begin{align*}
&||| \bfpsi_h |||_{1,h} + \om^{\frac{1}{2}} \xi \| \bfpsi_h \|_{L^2(\Gamma)}  \\
& \qquad \qquad \leq C \xi^2 h \big(\xi + \gamma_1 
+ \om h \big)^{\frac{1}{2}} \left( \om^\alpha 
+ \frac{1}{\om^2} \right) \Big( \| \bff \|_{L^2(\Om)} + \| \bfg \|^2_{L^2(\Gamma)} \Big).
\end{align*}

Recall that $\bfu - \bftu_h = \bfu - \bfhu_h - \bfpsi_h$.  By the triangle inequality we get
\begin{align*}
&||| \bfu - \bftu_h |||_{1,h} + \om^{\frac{1}{2}} \xi \| \bfu - \bftu_h \|_{L^2(\Gamma)} \\
& \qquad \qquad\leq ||| \bfu - \bfhu_h |||_{1,h} 
+ \om^{\frac{1}{2}} \xi \| \bfu - \bfhu_h \|_{L^2(\Gamma)}  
+ ||| \bfpsi_h |||_{1,h} + \om^{\frac{1}{2}} \xi \| \bfpsi_h \|_{L^2(\Gamma)} \\
& \qquad \qquad \leq C \xi^2h \big(\xi + \gamma_1 + \om h \big)^{\frac{1}{2}} 
\left( \om^\alpha + \frac{1}{\om^2} \right) 
\Big( \| \bff \|_{L^2(\Om)} + \| \bfg \|^2_{L^2(\Gamma)} \Big).
\end{align*}
Therefore, \eqref{Elliptic Projection Error Inequality a} holds.

To obtain \eqref{Elliptic Projection Error Inequality b}, we appeal to the duality 
argument by considering the following auxiliary PDE problem:  
\begin{align*}
-\bdiv ( \sigma(\bfw)) = \bfu - \bftu_h&  \qquad \mbox{in } \Om, \\
\sigma(\bfw) \bfn - \bfi \om \bfw = {\bf 0}& \qquad \mbox{on } \Gamma.
\end{align*}
It can be shown that there exists a unique solution $\bfw \in \bH^2(\Om)$ such that
\begin{align}
\|\bfw \|_{H^2(\Om)}  \leq C \| \bfu - \bftu_h \|_{L^2(\Om)}
\end{align}
Let $\bfhw_h \in \bfV_h$ denote the $P_1$-conforming finite element interpolant 
of $\bfw$ over $\mathcal{T}_h$.  Then by testing the above auxiliary problem with 
$\bfu - \bftu_h$ we get
\begin{align*}
&\| \bfu - \bftu_h \|^2_{L^2(\Om)} = - \big( \bfu - \bftu_h, \bdiv \sigma(\bfw) \big)_\Om \\
& \qquad= a_h(\bfu - \bftu_h,\bfw) 
+ \bfi \om \big \langle A(\bfu - \bftu_h), \bfw \big \rangle_\Gamma \\
& \qquad = a_h(\bfu - \bftu_h, \bfw - \bfhw_h) 
 \bfi \om \big \langle A(\bfu - \bftu_h), \bfw - \bfhw_h \big \rangle_\Gamma \\
& \qquad \leq C \Big( ||| \bfu - \bftu_h |||_{1,h}||| \bfw - \bfhw_h|||_{1,h} 
+ \om \| \bfu - \bftu_h \|_{L^2(\Gamma)} \| \bfw - \bfhw_h \|_{L^2(\Om)} \Big) \\
& \qquad \leq C \Big( (\xi + \gamma_1)^{\frac{1}{2}} h \| \bfw \|_{H^2(\Om)} 
||| \bfu - \bftu_h |||_{1,h} + \om h^{\frac{3}{2}} \| \bfw \|_{H^2(\Om)} 
\| \bfu - \bftu_h \|_{L^2(\Gamma)} \Big) \\
& \qquad \leq C \Big( (\xi + \gamma_1)^{\frac{1}{2}} h \| \bfu - \bftu_h \|_{L^2(\Om)} 
||| \bfu - \bftu_h |||_{1,h} 
+ \om h^{\frac{3}{2}} \| \bfu - \bftu_h \|_{L^2(\Om)} \| \bfu - \bftu_h \|_{L^2(\Gamma)} \Big).
\end{align*}
Hence,
\begin{align*}
\| \bfu - \bftu_h \|_{L^2(\Om)} 
&\leq C h \bigl( \xi + \gamma_1 + \om h\bigr)^{\frac{1}{2}} \Big( ||| \bfu - \bftu_h |||_{1,h} 
+ \om^{\frac{1}{2}} \xi \| \bfu - \bftu_h \|_{L^2(\Om)} \Big) \\
& \leq C \xi^2 h^2 \bigl(\xi + \gamma_1 + \om h \bigr) \left(\om^\alpha + \frac{1}{\om^2} \right) 
\big(\| \bff \|_{L^2(\om)}+ \| \bfg \|^2_{L^2(\Gamma)} \big).
\end{align*}
Thus, \eqref{Elliptic Projection Error Inequality b} holds.
\end{proof}

\subsection{Error estimates for the IP-DG method}
The goal of this subsection is to obtain upper bounds for the error $\bfe_h := \bfu - \bfu_h$, 
where $\bfu \in \bH^2(\Om)$ solve \eqref{Eq:ElasticPDE}--\eqref{Eq:ElasticBC} and 
$\bfu_h$ is its IP-DG approximation defined by \eqref{IP-DG Method}.  We shall accomplish 
this goal by using the stability estimate from Theorem \ref{Stability Theorem} and the 
elliptic projection studied in Subsection \ref{Projection Subsection}.  
Subtracting \eqref{IP-DG Method} from \eqref{Eq:WeakPDE} immediately infers the following 
Galerkin orthogonality for $\bfe_h$:
\begin{align}
a_h(\bfe_h, \bfv_h) - \om^2 \rho \big(\bfe_h, \bfv_h \big)_\Om 
+ \bfi \om \big \langle A\bfe_h, \bfv_h \big \rangle_\Gamma = {\bf 0} 
\qquad \forall \bfv_h \in \bfV_h. \label{Error Identity 1}
\end{align}
Let $\bftu_h \in \bfV_h$ be the elliptic projection of $\bfu$ as defined 
in \eqref{Elliptic Projection}.  Next we define $\bfpsi := \bfu - \bftu_h$ 
and $\bfphi_h := \bftu_h- \bfu_h$.  Thus, $\bfe_h$ can be decomposed as 
$\bfe_h = \bfpsi + \bfphi_h$. By Galerkin orthogonality given in 
\eqref{Galerkin Orthogonality Equation} and \eqref{Error Identity 1}, 
we have the following identity:
\begin{align} \label{Error Identity 1a} 
a_h(\bfphi_h, \bfv_h) &- \om^2 \rho \big(\bfphi_h, \bfv_h \big)_\Om 
+ \bfi \om \big \langle A \bfphi_h , \bfv_h \big \rangle_\Gamma \\
&= -a_h(\bfpsi, \bfv_h) + \om^2 \rho \big(\bfpsi, \bfv_h \big)_\Om 
- \bfi \om \big \langle A \bfpsi , \bfv_h \big \rangle_\Gamma \notag \\
&= \om^2 \rho \big(\bfpsi, \bfv_h \big)_\Om \qquad \forall \bfv_h \in \bfV_h.\notag
\end{align}
In other words we obtain that $\bfphi_h \in  \bfV_h$ solves \eqref{IP-DG Method} 
with $\bff = \om^2 \rho \bfpsi$ and $\bfg \equiv {\bf 0}$. This allows us to bound
$\bfphi_h$ by using the estimates from Theorem \ref{Stability Theorem} and 
Theorem \ref{Elliptic Projection Error}.  Specifically, we have the next lemma.

\begin{lemma} \label{IP-DG Error Lemma}
Let $\bfu \in \bH^2(\Om)$ solve \eqref{Eq:ElasticPDE}--\eqref{Eq:ElasticBC}, 
$\bfu_h$ be its IP-DG approximation, and $\bftu_h$ its elliptic projection.  
Then $\bfphi_h := \bftu_h - \bfu_h$ satisfies
\begin{align} \label{IP-DG Error Lemma Inequality}
&\| \bfphi_h \|_{1,h} + \om \rho^{\frac{1}{2}} \| \bfphi_h \|_{L^2(\Om)}  \\
& \qquad \qquad 
\leq C \xi^2 \om^2 h^2 \csta \Big( \xi + \gamma_1 + \om h \Big) 
\Big( \om^\alpha +\frac{1}{\om^2} \Big) \Big( \| \bff \|_{L^2(\Om)} +\| \bfg \|_{L^2(\Gamma)} \Big). 
\notag
\end{align}
\end{lemma}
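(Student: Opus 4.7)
The plan is to leverage the identity \eqref{Error Identity 1a} which tells us that $\bfphi_h$ is itself an IP-DG solution with source data $(\om^2\rho\,\bfpsi, \mathbf{0})$, and then apply the discrete stability bound from Theorem~\ref{Stability Theorem} together with the elliptic projection error bound \eqref{Elliptic Projection Error Inequality b}. Since both ingredients are already established, the proof is essentially a bookkeeping exercise.

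First I would observe that \eqref{Error Identity 1a} says $\bfphi_h\in \bfV_h$ satisfies
\begin{align*}
A_h(\bfphi_h,\bfv_h) = \om^2\rho\,(\bfpsi,\bfv_h)_\Om \qquad \forall\,\bfv_h\in\bfV_h,
\end{align*}
so $\bfphi_h$ is the IP-DG solution corresponding to volume source $\widetilde{\bff}:=\om^2\rho\,\bfpsi$ and boundary source $\widetilde{\bfg}:=\mathbf{0}$. Hence \eqref{Stability Theorem Inequality a} applies verbatim and yields
\begin{align*}
|\bfphi_h|_{1,h}^2 + \om^2\rho\,\|\bfphi_h\|_{L^2(\Om)}^2 \leq C\,\csta^2\,\|\om^2\rho\,\bfpsi\|_{L^2(\Om)}^2 = C\om^4\rho^2\,\csta^2\,\|\bfpsi\|_{L^2(\Om)}^2.
\end{align*}
Taking the square root and using $\|\bfphi_h\|_{1,h}\le |\bfphi_h|_{1,h}+(\text{jump terms which vanish for the same bound via }J_0,J_1)$ — more cleanly, I would note that since $\widetilde{\bfg}=\mathbf{0}$, the bounds \eqref{Stability Theorem Inequality a} and \eqref{Stability Theorem Inequality b} together control $\|\bfphi_h\|_{1,h}^2 = |\bfphi_h|_{1,h}^2 + J_0(\bfphi_h,\bfphi_h) + J_1(\bfphi_h,\bfphi_h)$, so we actually obtain
\begin{align*}
\|\bfphi_h\|_{1,h} + \om\rho^{\frac12}\|\bfphi_h\|_{L^2(\Om)} \leq C\,\om^2\rho\,\csta\,\|\bfpsi\|_{L^2(\Om)}.
\end{align*}

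Next I would invoke \eqref{Elliptic Projection Error Inequality b} to estimate $\|\bfpsi\|_{L^2(\Om)}=\|\bfu-\bftu_h\|_{L^2(\Om)}$ by
\begin{align*}
C\,\xi^2 h^2\,(\xi+\gamma_1+\om h)\,\bigl(\om^\alpha+\om^{-2}\bigr)\,\bigl(\|\bff\|_{L^2(\Om)}+\|\bfg\|_{L^2(\Gamma)}\bigr).
\end{align*}
Substituting this into the previous inequality gives exactly \eqref{IP-DG Error Lemma Inequality} (absorbing the constant $\rho$ into $C$).

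The argument is almost mechanical; the only subtlety I foresee is the first step — verifying that the right-hand side of \eqref{Error Identity 1a} is precisely of the form of \eqref{IP-DG Method} with the identified source data so that Theorem~\ref{Stability Theorem} can be applied as a black box, and then combining $|\cdot|_{1,h}$ with the jump-penalty contributions from \eqref{Stability Theorem Inequality b} to recover the full $\|\cdot\|_{1,h}$ norm on the left side. No new analytic ideas are required beyond these two cited results.
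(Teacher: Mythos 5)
Your proposal is correct and follows exactly the paper's argument: identify $\bfphi_h$ via \eqref{Error Identity 1a} as the IP-DG solution with data $(\om^2\rho\,\bfpsi,\mathbf{0})$, apply Theorem \ref{Stability Theorem}, and then insert the $L^2$ projection bound \eqref{Elliptic Projection Error Inequality b}. In fact you are slightly more careful than the paper on one point --- noting that \eqref{Stability Theorem Inequality a} only controls the seminorm $|\cdot|_{1,h}$ and that \eqref{Stability Theorem Inequality b} must be added (using $\om\,\csta\geq 1$) to recover the full $\|\cdot\|_{1,h}$ norm --- which the paper's one-line proof glosses over.
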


\begin{proof}
As mentioned above, \eqref{Error Identity 1a} implies that $\bfphi_h$ solves 
\eqref{IP-DG Method} with $\bff = \om^2 \rho \bfpsi$ and $\bfg \equiv {\bf 0}$. 
Thus, an application of Theorem \ref{Stability Theorem} yields
\begin{align*}
\|\bfphi_h \|_{1,h} + \om \rho^{\frac{1}{2}} \| \bfphi_h \|_{L^2(\Om)} 
&\leq \frac{C}{ \rho^{\frac{1}{2}}}  \csta  \| \om^2 \rho \bfpsi \|_{L^2(\Om)}\\
&\leq C \om^2 \rho^{\frac{1}{2}} \csta \| \bfpsi \|_{L^2(\Om)}.
\end{align*}
An application of Theorem \ref{Elliptic Projection Error} immediately infers
\eqref{IP-DG Error Lemma Inequality}.
\end{proof}

We are now ready to derive error estimates for our IP-DG method.  The next theorem is a 
consequence of combining Theorem \ref{Elliptic Projection Error} and 
Lemma \ref{IP-DG Error Lemma}.  

\begin{theorem} \label{IP-DG Error Theorem}
Let $\bfu \in \bH^2(\Om)$ solve \eqref{Eq:ElasticPDE}--\eqref{Eq:ElasticBC} and 
$\bfu_h$ be its IP-DG approximation.  Then $\bfu - \bfu_h$ satisfies the 
following estimates:
\begin{align}\label{IP-DG Error Theorem Inequality}
&\| \bfu - \bfu_h \|_{1,h} + \om \rho^{\frac{1}{2}} \| \bfu - \bfu_h \|_{L^2(\Om)}  \\
& \qquad \leq C \xi^2 h \big( \xi + \gamma_1 + \om h \big) 
\Big(\om^\alpha +\frac{1}{\om^2} \Big)\Big(\|\bff\|_{L^2(\Om)}+\|\bfg \|_{L^2(\Gamma)} \Big)\notag \\
& \qquad \qquad + C \xi^2 \om h^2 \big(1 + \om \csta \big) \big( \xi + \gamma_1 + \om h \big) 
\Big( \om^\alpha + \frac{1}{\om^2} \Big) \| \bff \|_{L^2(\Om)}, \notag \\
&\| \bfu - \bfu_h \|_{L^2(\Om)} \label{IP-DG Error Theorem Inequality 2} \\ \notag 
&\qquad \leq C \xi^2 h^2 \big(1 + \om \csta \big)\big( \xi + \gamma_1 + \om h \big)
\Big( \om^{\alpha} +\frac{1}{\om^2} \Big)\Big(\|\bff \|_{L^2(\Om)} + \| \bfg \|_{L^2(\pa \Om)} \Big),
\end{align}
where $\csta$ and $\xi$ are defined in \eqref{Csta}, and $C$ is a positive constant independent of 
$\om, h, \gamma_0, \gamma_1$.
\end{theorem}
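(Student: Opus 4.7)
The plan is to use the elliptic-projection splitting already set up in the subsection. Writing $\bfe_h := \bfu - \bfu_h = \bfpsi + \bfphi_h$ with $\bfpsi := \bfu - \bftu_h$ and $\bfphi_h := \bftu_h - \bfu_h$, the triangle inequality gives
\begin{align*}
\|\bfu - \bfu_h\|_{1,h} + \om\rho^{\frac{1}{2}}\|\bfu - \bfu_h\|_{L^2(\Om)}
\leq \bigl( \|\bfpsi\|_{1,h} + \om\rho^{\frac{1}{2}}\|\bfpsi\|_{L^2(\Om)} \bigr)
 + \bigl(\|\bfphi_h\|_{1,h} + \om\rho^{\frac{1}{2}}\|\bfphi_h\|_{L^2(\Om)}\bigr),
\end{align*}
and similarly $\|\bfu - \bfu_h\|_{L^2(\Om)} \leq \|\bfpsi\|_{L^2(\Om)} + \|\bfphi_h\|_{L^2(\Om)}$. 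So the task reduces to plugging in the bounds already available for the two pieces.

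For the $\bfpsi$ piece, I would invoke Theorem \ref{Elliptic Projection Error}: its first estimate controls $|||\bfpsi|||_{1,h}$ (which dominates $\|\bfpsi\|_{1,h}$ by the very definition of the triple-bar norm) as well as $\om^{1/2}\xi\|\bfpsi\|_{L^2(\Gamma)}$, while its second estimate controls $\|\bfpsi\|_{L^2(\Om)}$. The bound on $\om\rho^{1/2}\|\bfpsi\|_{L^2(\Om)}$ then follows by multiplying the $L^2$-estimate by $\om$, which is why an extra factor of $\om h$ shows up and gets absorbed into $(\xi + \gamma_1 + \om h)$. For the $\bfphi_h$ piece, I would quote Lemma \ref{IP-DG Error Lemma} directly; the key observation, already recorded in the identity \eqref{Error Identity 1a} above the lemma, is that $\bfphi_h$ solves the IP-DG scheme with source $\om^2\rho\bfpsi$ and zero boundary data, which explains why only $\|\bff\|_{L^2(\Om)}$ (and not $\|\bfg\|_{L^2(\Gamma)}$) appears in the second term of \eqref{IP-DG Error Theorem Inequality} through the factor $\om\csta$.

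To get the $L^2$-estimate \eqref{IP-DG Error Theorem Inequality 2}, I would note that dividing the Lemma \ref{IP-DG Error Lemma} bound by $\om\rho^{1/2}$ yields
\begin{align*}
\|\bfphi_h\|_{L^2(\Om)} \leq C\xi^2 \om h^2 \csta (\xi + \gamma_1 + \om h)\Bigl(\om^\alpha + \frac{1}{\om^2}\Bigr) \bigl(\|\bff\|_{L^2(\Om)} + \|\bfg\|_{L^2(\Gamma)}\bigr),
\end{align*}
which combines with the $L^2$-bound on $\bfpsi$ from \eqref{Elliptic Projection Error Inequality b} to produce the factor $(1 + \om\csta)$ after collecting terms. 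Finally, since $\xi \geq 1$, the square root $(\xi+\gamma_1+\om h)^{1/2}$ appearing in \eqref{Elliptic Projection Error Inequality a} can be replaced by $(\xi + \gamma_1 + \om h)$ at the cost of a constant, matching the form in the theorem.

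I do not expect a serious obstacle here: the projection estimates and the discrete stability estimate have already done the technical work, and this proof amounts to carefully reading off the right norm on each term and tracking the resulting $\om$, $h$, $\xi$, $\gamma_1$, $\csta$ factors. The only mild care needed is (i) recognizing that $\|\cdot\|_{1,h} \leq |||\cdot|||_{1,h}$ so that the triple-bar bound of the projection error directly controls the single-bar norm that appears in the theorem, and (ii) correctly routing the boundary data $\bfg$: it enters the $\bfpsi$ bounds through Theorem \ref{Elliptic Projection Error} but drops out of the $\bfphi_h$ bounds because the auxiliary source for $\bfphi_h$ has $\bfg \equiv \mathbf 0$, which is exactly why \eqref{IP-DG Error Theorem Inequality} has $\|\bff\|_{L^2(\Om)}$ (and not $\|\bfg\|_{L^2(\Gamma)}$) multiplying the $\om\csta$ term.
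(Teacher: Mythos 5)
Your proof follows exactly the same route as the paper's: split $\bfu-\bfu_h=\bfpsi+\bfphi_h$ via the elliptic projection, bound $\bfpsi$ by Theorem \ref{Elliptic Projection Error}, bound $\bfphi_h$ by Lemma \ref{IP-DG Error Lemma}, and combine with the triangle inequality. The bookkeeping you describe (using $\|\cdot\|_{1,h}\le|||\cdot|||_{1,h}$, enlarging $(\xi+\gamma_1+\om h)^{1/2}$ to $(\xi+\gamma_1+\om h)$, and dividing the $\bfphi_h$ bound by $\om\rho^{1/2}$ for the $L^2$ estimate) matches the paper's computation.

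The one point where your reasoning is wrong is the claim that $\|\bfg\|_{L^2(\Gamma)}$ drops out of the $\bfphi_h$ contribution because the auxiliary problem for $\bfphi_h$ has zero boundary data. The auxiliary source is $\om^2\rho\bfpsi$, and $\|\bfpsi\|_{L^2(\Om)}$ is itself controlled by \eqref{Elliptic Projection Error Inequality b} in terms of \emph{both} $\|\bff\|_{L^2(\Om)}$ and $\|\bfg\|_{L^2(\Gamma)}$; accordingly the right-hand side of Lemma \ref{IP-DG Error Lemma} explicitly contains $\|\bfg\|_{L^2(\Gamma)}$, and the paper's own displayed chain of inequalities likewise ends with $\bigl(\|\bff\|_{L^2(\Om)}+\|\bfg\|_{L^2(\Gamma)}\bigr)$ multiplying the $\om\csta$ term. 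So what this argument actually establishes is the version of \eqref{IP-DG Error Theorem Inequality} with $\|\bff\|_{L^2(\Om)}+\|\bfg\|_{L^2(\Gamma)}$ in the second term --- harmless in practice, since $\|\bfg\|$ already appears in the first term, but the sharper form with $\|\bff\|$ alone is not justified by the route you (or, for that matter, the paper's displayed proof) take, and your proposed explanation for it does not hold.
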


\begin{proof}
Recall that $\bfe_h= \bfu - \bfu_h= \bfpsi+\bfphi_h$ and 
the desired estimates for $\bfpsi$ and $\bfphi_h$ have already been established in 
Theorem \ref{Elliptic Projection Error} and Lemma \ref{IP-DG Error Lemma}, respectively. These 
estimates are combined in the following steps to obtain \eqref{IP-DG Error Theorem Inequality}:
\begin{align*}
&\| \bfe_h \|_{1,h} + \om \rho^{\frac{1}{2}} \| \bfe_h \|_{L^2(\Om)} \\
& \qquad \leq |||\bfpsi |||_{1,h} + \om \rho^{\frac{1}{2}} \| \bfpsi \|_{L^2(\Om)} 
+ \| \bfphi_h \|_{1,h} + \om \rho^{\frac{1}{2}} \| \bfphi_h \|_{L^2(\Om)} \\
& \qquad \leq C \xi^2 h \big(\xi + \gamma_1 + \om h \big)^{\frac{1}{2}} 
\Big( \om^\alpha + \frac{1}{\om^2} \Big)^{\frac{1}{2}} 
\Big(\| \bff \|_{L^2(\Om)} + \| \bfg \|_{L^2(\Gamma)} \Big) \\
& \qquad \qquad + C \xi^2 \om h^2 \big(\xi + \gamma_1 + \om h \big)
\Big( \om^\alpha +\frac{1}{\om^2} \Big) \Big(\|\bff \|_{L^2(\Om)} + \| \bfg \|_{L^2(\Gamma)} \Big) \\
& \qquad \qquad + C \xi^2 \om^2 h^2  \csta \big( \xi + \gamma_1 + \om h \big) 
\Big( \om^\alpha + \frac{1}{\om^2} \Big) \Big(\|\bff \|_{L^2(\Om)} + \|\bfg \|_{L^2(\Gamma)} \Big) \\
& \qquad \leq C \xi^2  h  \big( \xi + \gamma_1 + \om h \big) 
\Big( \om^\alpha + \frac{1}{\om^2} \Big) \Big(\|\bff \|_{L^2(\Om)} +\|\bfg \|_{L^2(\Gamma)} \Big) \\
& \qquad \qquad + C \xi^2 \om h^2 \big(1 + \om \csta \big) \big( \xi + \gamma_1 + \om h \big) 
\Big( \om^\alpha + \frac{1}{\om^2} \Big)\Big(\| \bff \|_{L^2(\Om)} + \| \bfg \|_{L^2(\Gamma)} \Big).
\end{align*}
Similarly, \eqref{IP-DG Error Theorem Inequality 2} is obtained by combining 
Theorem \ref{Elliptic Projection Error} and Lemma \ref{IP-DG Error Lemma}.
\end{proof}

\begin{remark} \label{Error Remark}
(a) The above estimates are sub-optimal in general due to the dependence of $\csta$ 
on the mesh size $h$. However, when $h$ belongs to the pre-asymptotic 
mesh regime (i.e. when $\om^\beta h =O(1)$ for $1\leq \beta < 1+\alpha$), 
$\csta$ can be bounded by a constant which is independent of $h$ as explained in
Remark \ref{rem-Csta}. Therefore, the above error estimates become optimal (in $h$)
in the pre-asymptotic mesh regime.  

(b) Estimates that are optimal in $h$ in the asymptotic mesh regime can be found in \cite{Lorton_14}.
\end{remark}

\section{Numerical experiments} \label{sec:elastic_IPDG_numerics}
In this section, we present some numerical tests to demonstrate key features of the 
proposed IP-DG method. In all our tests we choose $\Om = (-0.5,0.5)^2 \subset \mathbb{R}^2$ 
(i.e. the unit square in $\mathbb{R}^2$ centered at the origin), along with the material 
constants $\rho = \mu = \lambda = 1$, and penalty constants $\gamma_0 = 10$ and $\gamma_1 = 0.1$.  
For the sake of testing the exact error, $\bff$ and $\bfg$ are chosen so that the exact 
solution to the elastic Helmholtz problem is 
$\bfu = \frac{1}{\om^2 r}[e^{\bfi \om r} - 1, e^{-\bfi \om r} - 1]^T$, where $r = \| \bfx \|$
denotes the Euclidean length of $\bfx$.  
This simple problem along with the subsequent numerical tests are chosen to mirror those 
in \cite{Feng_Wu09} for the scalar Helmholtz problem.  
Some sample plots are given in Figures \ref{fig:Ex1} and \ref{fig:Ex2}.  These plots 
demonstrate how well the proposed IP-DG method can capture the wave with large 
frequency when using a relatively coarse mesh. 

To partition the domain $\Om$, a uniform triangulation $\mathcal{T}_h$ is used.  For a 
positive integer $n$, define $\mathcal{T}_{1/n}$ to be a triangulation of $2n^2$ congruent 
isosceles triangles with side lengths $1/n,1/n,$ and $\sqrt{2}/n$.  
Figure \ref{fig:MeshExample} shows the triangulation $\mathcal{T}_{1/10}$.

The numerical tests in this section intend to demonstrate the following:
\begin{itemize}
\item absolute stability of the proposed IP-DG method,
\item error of the proposed IP-DG solution,
\item pollution effect on the error when $\om h = O(1)$,
\item absence of the pollution effect when $\om^3 h^2 = O(1)$,
\item performance comparison between standard FE and the proposed IP-DG method on the test problem.
\end{itemize}

\begin{figure}[htb]
\centering
\includegraphics[scale=0.25]{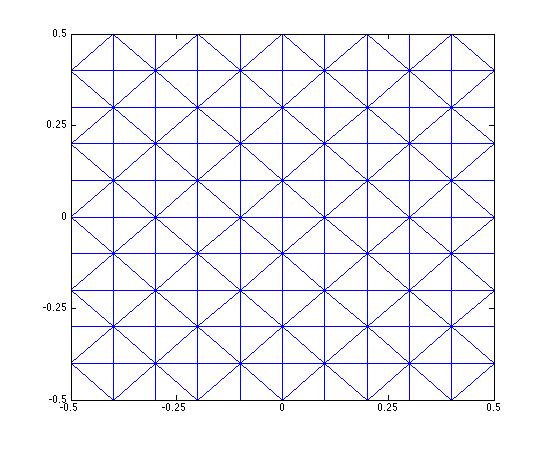}
\caption{The triangulation $\mathcal{T}_{1/10}$. \label{fig:MeshExample}}
\end{figure}

\begin{figure}[htb]
\centerline{\includegraphics[scale = 0.25]{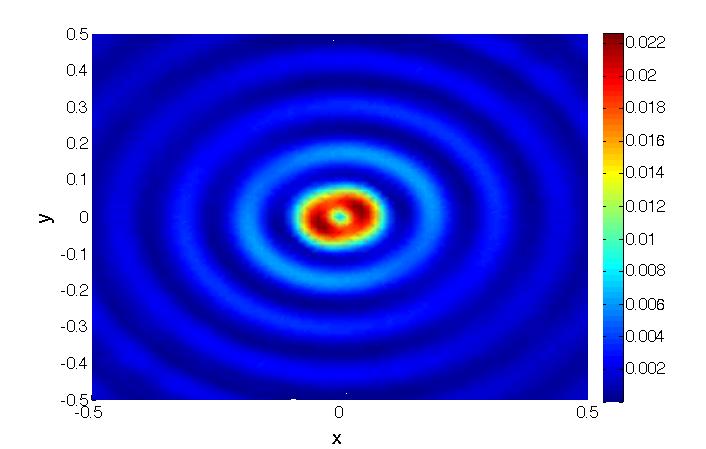} \includegraphics[scale = 0.25]{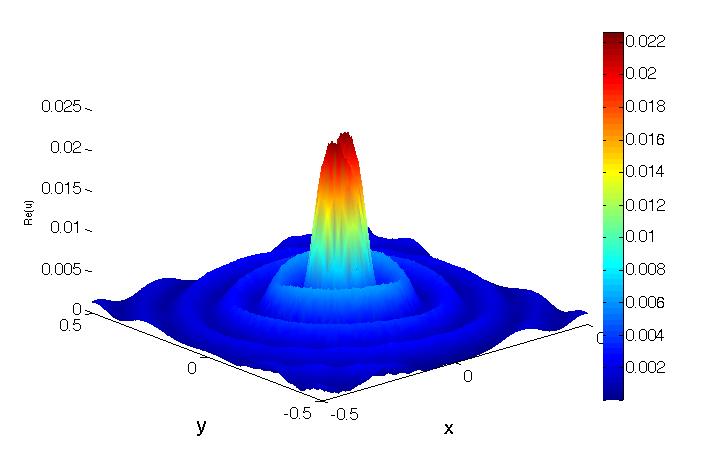}}
\caption{Plot of $\| \re \big(\bfu_h \big) \|_{L^2(\Ome)}$ for $\om = 50$ and $h = 1/70$. 
Both a top down view (left) and a side view (right) are shown. \label{fig:Ex1}}
\end{figure}

\begin{figure}[htb]
\centerline{\includegraphics[scale = 0.25]{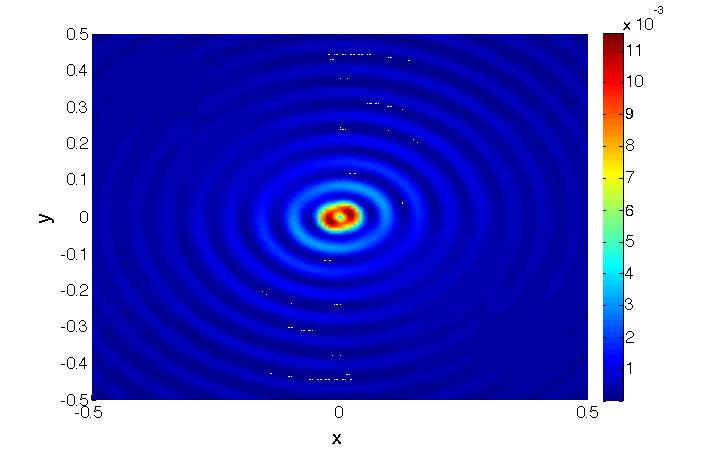} \includegraphics[scale = 0.25]{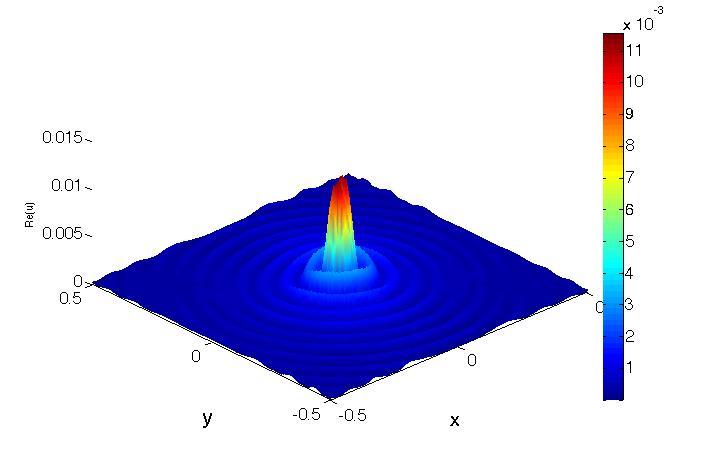}}
\caption{Plot of $\| \re \big(\bfu_h \big) \|_{L^2(\Ome)}$ for $\om = 100$ and $h = 1/120$. 
Both a top down view (left) and a side view (right) are shown. \label{fig:Ex2}}
\end{figure}

\subsection{Stability} \label{subsec:elastic_IPDG_numerical_stability}
In this subsection, the stability of both the proposed IP-DG method and the $P_1$-conforming 
finite element method will be discussed. Let $\bfu^{FEM}_h$ denote the $P_1$-conforming 
finite element approximation of $\bfu$. Recall that the proposed IP-DG approximation 
is unconditionally stable, i.e. it is stable for all $\om, h, \gamma_0, \gamma_1 > 0$.  
This has yet been established for the $P_1$-conforming finite element approximation.  
In fact, the stability of the $P_1$-conforming finite element approximation is only known 
to hold when $h$ satisfies $\om^2 h \leq C$. 

Figure \ref{fig:Stability} plots both $\| \bfu_h \|_{1,h}$ and $\| \bfu^{FEM} \|_{1,h}$ 
for $h = 0.05, 0.01$ and $\om = 1,2,\cdots,$ $200$.  We observe that $\| \bfu_h \|_{1,h}$ decreases 
in a smooth fashion as $\om$ increases.  This smooth behavior of $\| \bfu_h \|_{1,h}$ 
is indicative of the absolute stability of the IP-DG approximation.  On the other hand, 
we observe oscillations in $\| \bfu_h^{FEM} \|_{1,h}$ that occur when we vary $\om$.  
This oscillation is indicative of the instability of the $P_1$-conforming finite 
element method when $h$ is too large.

\begin{figure}[htb]
\centerline{\includegraphics[scale = 0.25]{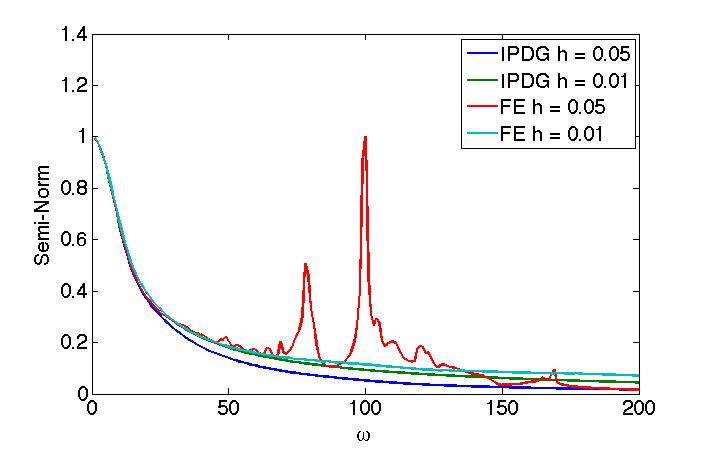} \includegraphics[scale = 0.25]{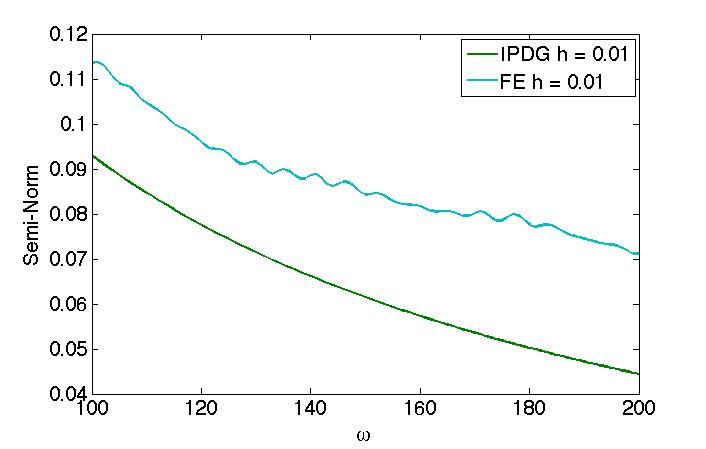}}
\caption{Plots of $\| \bfu_h \|_{1,h}$ and $\| \bfu_h^{FEM} \|_{1,h}$. \label{fig:Stability}}
\end{figure}

\subsection{Error} \label{subsec:elastic_IPDG_numerical_error}
In this subsection, the optimal order of convergence for the proposed IP-DG method will 
be demonstrated.  The pollution effect will also be examined.  From 
Theorems \ref{IP-DG Error Theorem} and the error analysis for the asymptotic mesh regime carried 
out in \cite{Lorton_14} (c.f. Remark \ref{Error Remark}) we expect the error 
in $\| \cdot \|_{1,h}$ to decrease at an optimal order in both the pre-asymptotic 
and asymptotic mesh regimes.  In other words, $\| \bfu - \bfu_h \|_{1,h} = O(h)$ 
is expected. Figure \ref{ErrorPlot} is a log-log plot of the relative error 
$\| \bfu - \bfu_h \|_{1,h} / \| \bfu \|_{1,h}$ against the value $1/h$ for 
frequencies $\om = 5, 10, 20, 30$.  From this plot, it is observed that the relative 
error decreases at the same rate as $h$, thus displaying the optimal order of convergence 
in the relative semi-norm.  Also displayed in Figure \ref{ErrorPlot} is the error 
when $\om$ varies according to the constraint $\om h = 0.25$. From this figure it is 
observed that the error increases as $\om$ increases under this constraint.  
This is due to the pollution effect on the error for the elastic Helmholtz problem. 

\begin{figure}[htb]
\centering
\includegraphics[scale=0.25]{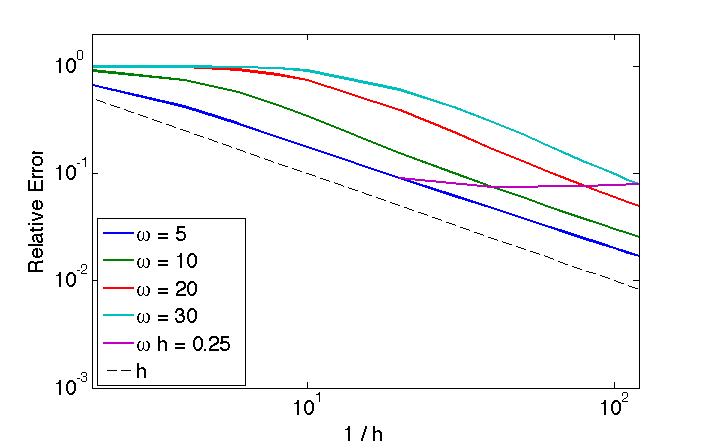}
\caption{Log-log plot of the relative error for the IP-DG approximation measured in 
the broken $H^1$-seminorm for different values of $\omega$. \label{ErrorPlot}}
\end{figure}

The pollution effect for Helmholtz-type problems is characterized by the increase in error 
as $\om$ is increased under the constraint $\om h = O(1)$.  This effect is intrinsic to 
Helmholtz-type problems (c.f. \cite{Ihlenburg_Babuska95}).  It is well-known that the pollution 
effect can be eliminated if $h$ is chosen to fulfill the stronger constraint $\om^3 h^2 = O(1)$.  
In Figure \ref{fig:PolPlots} the relative error is plotted against $\om$ as $h$ is chosen 
to satisfy different constraints.  Under the constraints $\om h = 1$ and $\om h = 0.5$, 
the pollution effect is present and the relative error increases as $\om$ is increased. 
On the other hand, when $\om^3 h^2 = 1$ is used to choose the the mesh size $h$, 
the pollution effect is eliminated.  

\begin{figure}[htb]
\centerline{\includegraphics[scale=0.25]{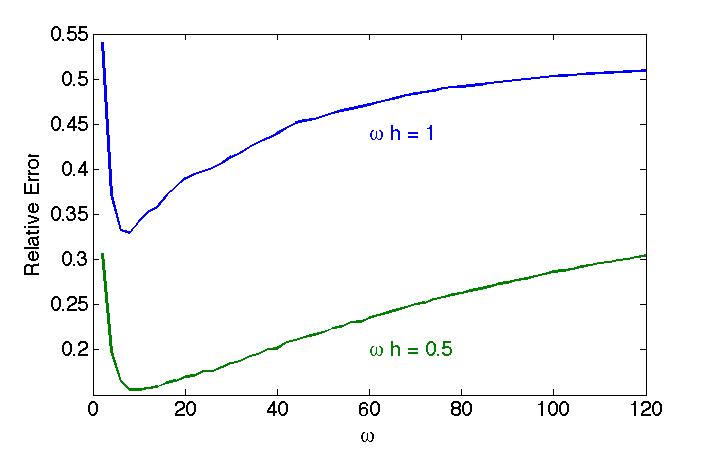} \includegraphics[scale=0.25]{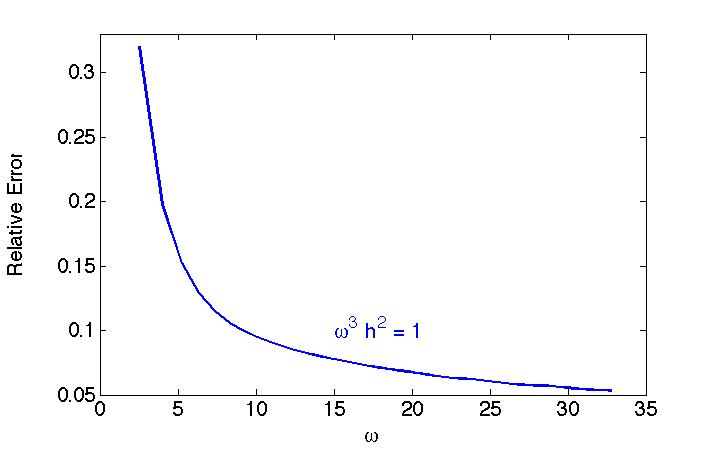}}
\caption{Relative error of the IP-DG approximations measured in the broken $H^1$ seminorm 
computed for different pairs of $\om$ and $h$ combinations.  \label{fig:PolPlots}}
\end{figure}

\subsection{Comparison between the IP-DG method and the FE method} \label{subsec:elastic_IPDG_vs_FEM}
In this subsection, we compare the performance of the proposed IP-DG method and the $P_1$-conforming 
finite element method.  As stated previously, the proposed IP-DG method is unconditionally stable 
while the $P_1$-conforming finite element method is only shown to be stable when $h$ 
satisfies $\om^2 h = O(1)$.  With this in mind, one can anticipate that in the case that the 
frequency $\om$ is large, the IP-DG method should perform better.  

In Figures \ref{fig:IP-DGVSFEM1}--\ref{fig:IP-DGVSFEM3}, $\| \re (\bfu_h) \|_{L^2(\Ome)}$ 
and $\| \re (\bfu^{FEM}_h) \|_{L^2(\Ome)}$ are plotted for $\om = 100$ and $h = 1/50, 1/120, 1/200$ 
on a cross-section over the line $y = x$.  In addition, $\| \re (\bfu) \|_{L^2(\Ome)}$ is plotted 
to measure how well the respective approximations capture the true solution.  In 
Figure \ref{fig:IP-DGVSFEM1}, it is observed that $\bfu_h$ already captures the phase of $\bfu$ 
with $h = 1/50$ while not fully capturing the large changes in magnitude.  On the other hand, 
for $h = 1/50$, $\bfu^{FEM}_h$ has spurious oscillations. In this case, $\bfu^{FEM}_h$ also 
fails to capture the changes in the magnitude of the wave.  In Figure \ref{fig:IP-DGVSFEM2}, 
we see that for $h = 1/120$, $\bfu_h$ captures the phase and changes in magnitude of the wave 
very well while $\bfu^{FEM}_h$ still displays spurious oscillations.  In Figure \ref{fig:IP-DGVSFEM3}, 
we see for $h = 1/200$, both methods capture the wave well. However, the IP-DG method captures 
the wave slightly better.  These examples demonstrate that the IP-DG method approximates 
high frequency waves better than the standard finite element method does, in particular, on a coarse 
mesh. This is of great importance when memory is limited or one wishes to employ a multi-level 
solver such as multigrid or multi-level Schwarz space/domain decomposition methods.

\begin{figure}[htb]
\centerline{\includegraphics[scale = .25]{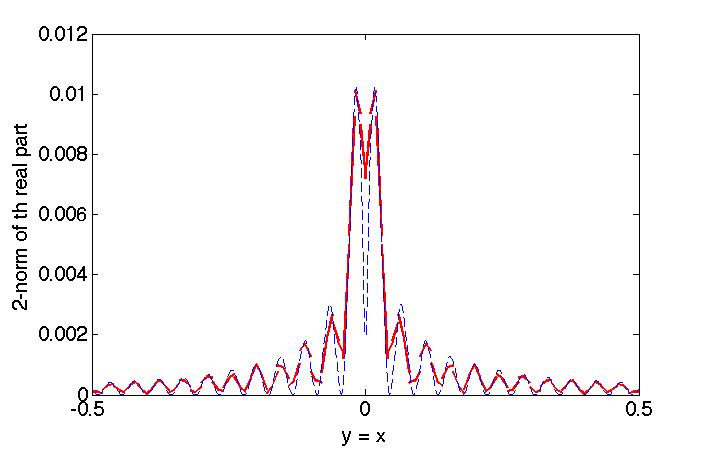} \includegraphics[scale = .25]{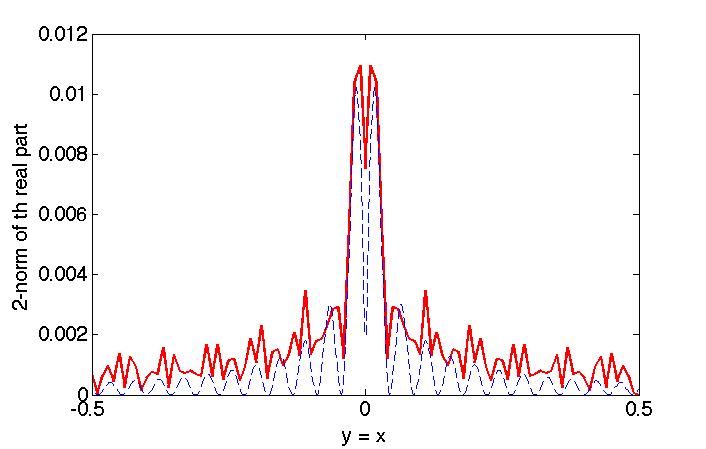}}
\caption{The left plot is of $\| \re (\bfu_h) \|_{L^2(\Ome)}$ (solid red line) vs. 
$\| \re (\bfu) \|_{L^2(\Ome)}$ (dashed blue line) for $h = 1/50$.  The right plot is of 
$\| \re (\bfu^{FEM}_h) \|_{L^2(\Ome)}$ (solid red line) vs. $\| \re (\bfu) \|_{L^2(\Ome)}$ 
(dashed blue line) for $h = 1/50$. \label{fig:IP-DGVSFEM1}}
\end{figure}

\begin{figure}[htb]
\centerline{\includegraphics[scale = .25]{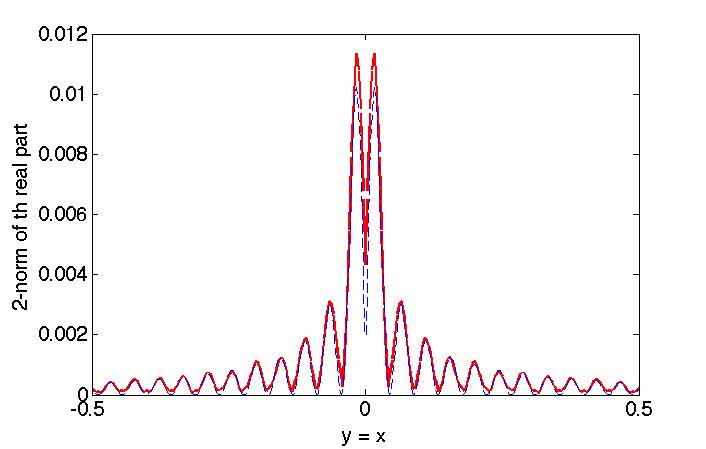} \includegraphics[scale = .25]{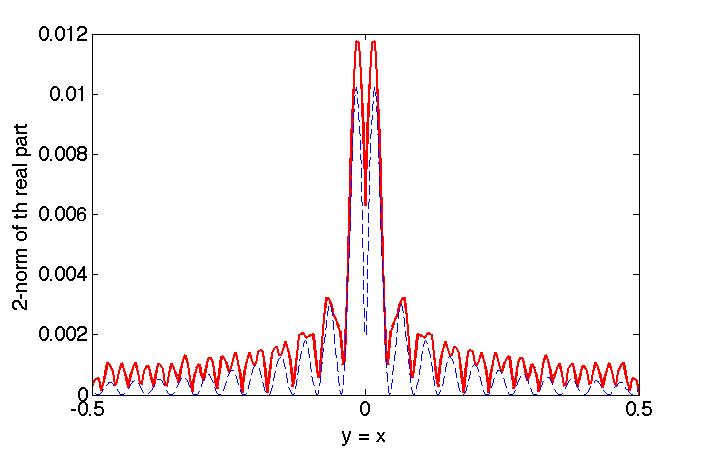}}
\caption{The left plot is of $\| \re (\bfu_h) \|_{L^2(\Ome)}$ (solid red line) vs. 
$\| \re (\bfu) \|_{L^2(\Ome)}$ (dashed blue line) for $h = 1/120$.  The right plot is
of $\| \re (\bfu^{FEM}_h) \|_{L^2(\Ome)}$ (solid red line) vs. $\| \re (\bfu) \|_{L^2(\Ome)}$ 
(dashed blue line) for $h = 1/120$. \label{fig:IP-DGVSFEM2}}
\end{figure}

\begin{figure}[htb]
\centerline{\includegraphics[scale = .25]{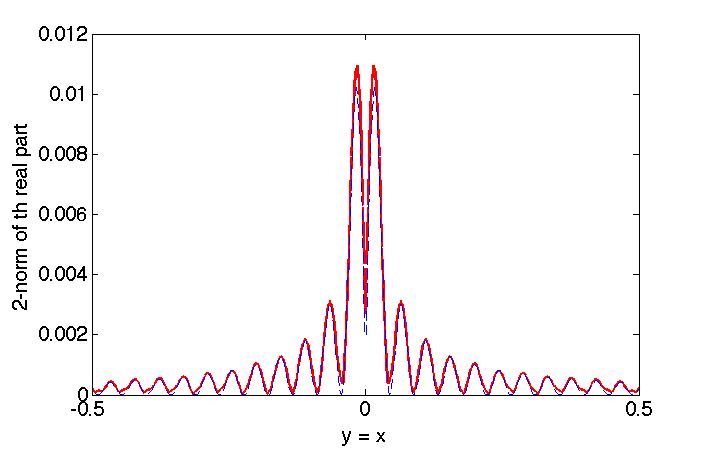} \includegraphics[scale = .25]{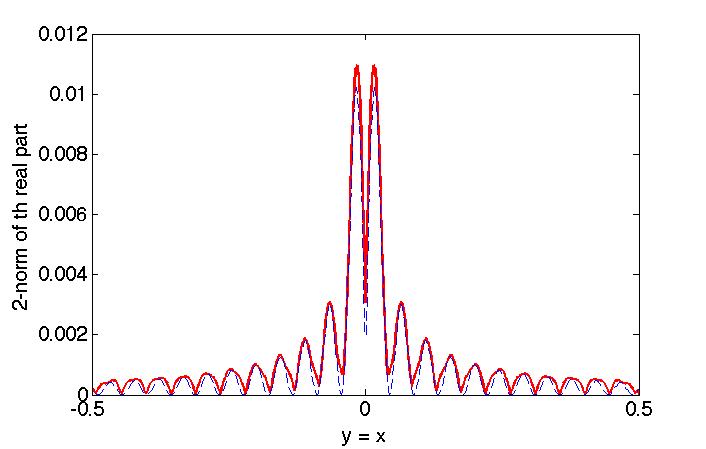}}
\caption{The left plot is of $\| \re (\bfu_h) \|_{L^2(\Ome)}$ (solid red line) vs. 
$\| \re (\bfu) \|_{L^2(\Ome)}$ (dashed blue line) for $h = 1/200$.  The right plot is 
of $\| \re (\bfu^{FEM}_h) \|_{L^2(\Ome)}$ (solid red line) vs. $\| \re (\bfu) \|_{L^2(\Ome)}$ 
(dashed blue line) for $h = 1/200$. \label{fig:IP-DGVSFEM3}}
\end{figure}


\end{document}